\newcommand{\p}{\partial}
\renewcommand{\phi}{\varphi}
\newcommand{\e}{\epsilon}
\newcommand{\R}{{\mathbb R}}
\newcommand{\EX}{{\mathbb{E}}}
\newcommand{\PX}{{\mathbb{P}}}
\newcommand{\cF}{\mathcal{F}}
\newtheorem{theorem}{Theorem}
\newtheorem{lemma}{Lemma}
\newtheorem{remark}{Remark}
\title{Ensemble Averaging for Dynamical Systems under Fast Oscillating Random Boundary Conditions}
\author{Wei Wang$^1$, Jian Ren$^2$, Jinqiao Duan$^3$ and Guowei He$^4$     \\
1. Department of Mathematics, Nanjing University\\Nanjing, China   \\
 \emph{E-mail:  wangweinju@yahoo.com.cn   }\\
2. School of Mathematics and Statistics\\Huazhong University of Science and Technology\\Wuhan 430074, China  \\
 \emph{E-mail:  renjian0371@gmail.com    }\\
3. Institute for Pure and Applied Mathematics, University of California\\
Los Angeles, CA 90095, USA\\
\emph{E-mail: jduan@ipam.ucla.edu }\\ \& \\
   Department of Applied Mathematics, Illinois Institute of Technology \\
  Chicago, IL 60616, USA \\
 \emph{E-mail:  duan@iit.edu }\\
4. Laboratory for Nonlinear Mechanics\\Institute of Mechanics, Chinese Academy of Sciences\\
 Beijing 100080, China\\  \emph{E-mail:     hgw@lnm.imech.ac.cn }
 }
\begin{document}
\date\today

\maketitle

\pagestyle{plain}

\begin{abstract}

This paper is devoted to provide a theoretical underpinning for ensemble forecasting with rapid fluctuations in body forcing and in boundary conditions. Ensemble averaging principles are proved under suitable `mixing' conditions on random boundary conditions and on random body forcing. The ensemble averaged model is a nonlinear stochastic partial differential equation, with the deviation process (i.e., the approximation error process)  quantified as the solution of a linear stochastic partial differential equation.

\medskip

 {\bf Short Title:} Ensemble Averaging under Random Boundary Conditions    \\

 {\bf Key Words:}  Multiscale modeling, ensemble averaging, random partial differential equations; stochastic partial differential equations; random boundary conditions; martingale.

{\bf Mathematics Subject Classifications (2010)}:   60H15, 60F10,
60G17

\end{abstract}

\section{Motivation}  \label{motive}

A complex system often involves with multiple scales, uncertain parameters or coefficients,  and fluctuating  interactions with its environment.
Ensemble forecasting for such a complex system is a   prediction method   to obtain   collective or ensembled view of its dynamical evolution,  by generating    multiple numerical predictions   using   different   but plausible realizations of a model for the system.   The multiple simulations are generated to account for errors introduced by  sensitive dependence on the initial/boundary conditions and  errors introduced due to imperfections in the model \cite{Palmer1, Palmer2}.

In order to better understand the theoretical foundation of ensemble forecasting, we consider a   system modeled by partial differential equations~(PDEs) with fast oscillating random forcing in the physical medium or on the physical boundary, and show that ensemble averaged dynamics converges to the original dynamics, as a  scale parameter tends to zero.

Some relevant recent works~\cite{CSW, DIP, Pardoux2012, Watanabe} are about averaging or   homogenization   for  random partial differential equations (random PDEs) with fast oscillating coefficients in time or   space.  Different from the method in the above mentioned works, we present a more direct approach, in order to derive an averaging principle and deviation estimates for PDEs with random oscillating coefficients   and  random oscillating  boundary conditions.
We previously studied ~\cite{WangDuan09} stochastic partial differential equations~(stochastic PDEs) with perturbed white noise dynamical boundary conditions which are measured by a small scale parameter $\e>0$. In that case, the effectively reduced model does not capture the influence  of  the random force on boundary.

In the present paper,  we  consider the following PDE with a random oscillating   body forcing and/or a small random oscillating boundary condition, for a unknown random field $u^\e(x, t, \omega)$
 \begin{eqnarray}\label{rpde888}
&&u^\e_t=u^\e_{xx} + g(t/\e, u^\e, \omega)  \,, \quad u^\e(x, 0)=u_0(x), \\
&&u^\e(0, t)=\sqrt{\e} f(t/\e, \omega), \; \quad  u^\e(l, t)=0.\label{rpde888-bc}
\end{eqnarray}
Here  $x \in (0, l)$, $l>0$, $t>0$, $\e$ is a small positive scale parameter, and $\omega$ is in a sample space $\Omega$.   A probability $\PX$ with a $\sigma-$algebra $\cF$ is defined on this sample space. The mathematical expectation with respect to $\PX$ is denoted by $\EX$. We often suppress the $\omega-$dependence for notational clarity. 



We prove an ensemble averaging theorem~(Theorem \ref{thm:AVE-DBC} in \S \ref{boundary888}) for the random system   (\ref{rpde888})--(\ref{rpde888-bc}), i.e., we obtain a stochastic model for $u^\e$ as $\e \to 0$.  It turns out that the random    boundary condition appears as a white noise on the dynamical field equation for $u$, as $\e\rightarrow 0$\,.   The ensemble averaged model is a stochastic partial differential equation (stochastic PDE) for $u$, instead of a random PDE, with a homogenous  boundary condition. When the random boundary condition is absent, we further show that the deviation process (i.e., approximation error process), $u^\e -u$, can be quantified as the solution of a linear stochastic PDE (Theorem \ref{thm:AVE-DEV-RPDE}  in \S \ref{body888}).

On the technical side, in order to pass the limit $\e\rightarrow 0$\,, we  first prove the tightness of the distribution of $\{u^{\e}\}$\,, so we   just consider $\langle u^{\e}, \phi\rangle$ for every bounded continuous function $\phi$ with compact support. Then, in \S \ref{sec:ens-AVE}, we construct a process $\mathcal{M}^{\e}_{t}$,   which is a martingale by Ethier and Kurtz's result~\cite[Proposition 2.7.6]{EK86}.   This construction is  very direct~\cite{DIP}. By passing the limit $\e\rightarrow 0$ in $\mathcal{M}_{t}^{\e}$, we obtain the stochastic PDE satisfied by the limit $u$ of $u^{\e}$\,.
This method is  also applied to show that the deviation process, $u^\e -u$, is the solution of a linear stochastic PDE; see \S ~\ref{body888}.  


Note that we take $\sqrt{\e}$ as the intensity scale for the noise boundary condition. This is  for simplicity.  In fact our approach can also treat the case  $\e^{\frac{\alpha}{2}\wedge 1}f(t/\e^{\alpha})$\,, with $0<\alpha \leq 2$\,. A   similar case is also discussed in \cite{DIP}. But the case $\alpha>2$
 is more  singular, one should consider the limit of  $\e^{\frac{\alpha}{2}-1}u^{\e}$ as $\e\rightarrow 0$\,.


This paper is organized as follows. After recalling some basic background, we prove an ensemble averaging theorem for a random PDE system with a random boundary condition and  with a random body forcing, in \S \ref{boundary888},  and further characterize the deviation process in \S \ref{body888}.

\section{Ensemble averaging under small fast oscillating  random  boundary conditions}
\label{boundary888}

 We consider the random PDE system (\ref{rpde888})--(\ref{rpde888-bc}).  Consider the Hilbert space $H=L^{2}(0, l)$ with  the usual norm   $\|\cdot\|_{0}$ and   inner product   $\langle\cdot, \cdot \rangle$. Define $A=\p_{xx}$ with the zero Dirichlet boundary condition. It   defines a compact analytic semigroup $S(t)$\,, $t\geq 0$\,, on $H$. Denote by $0<\lambda_{1}\leq \lambda_{2}\leq \cdots$ the eigenvalues of $-A$  with the corresponding eigenfunctions $\{e_{k}\}_{k=1}^{\infty}$\,, which forms an   orthonormal   basis of $H$\,. For every $\alpha>0$,  define a new norm $\|u\|_{\alpha}=\|(-A)^{\alpha/2}u\|_{0}$, for those $u\in H$ such that this quantity is finite.


 Here we make the following assumptions about the mixing properties of the random boundary and body forcing in the random PDE system (\ref{rpde888})--(\ref{rpde888-bc}).
\begin{description}
  \item[(\textbf{H}$_{g}$)] For every $t$\,, $g(t, \cdot)$ is Lipschitz continuous in $u$ with Lipschitz constant~$L_{g}$ and $g(t, 0)=0$\,. For every $u\in H$\,, $g(\cdot, u)$ is an  $H$-valued  stationary random process and is strongly mixing with an exponential rate $\gamma>0$. That is,
  \begin{equation*}
    \sup_{s\geq 0}\sup_{U\in\mathcal{G}_0^s\,, V\in\mathcal{G}_{s+t}^\infty}|\mathbb{P}(U\cap V)-\mathbb{P}(U)\mathbb{P}(V)|    \leq e^{-\gamma t}\,, \quad t\geq 0,
  \end{equation*}
  where $0\leq s\leq t\leq \infty$\,,  and $\mathcal{G}_s^t=\sigma\{g(\tau, u): s\leq \tau\leq t\}$\, is the $\sigma$-algebra generated by $\{g(\tau, u): s\leq \tau\leq t\}$\,.
  \item[(\textbf{H}$_{f}$)]
  The process $f(t)$ is a bounded continuous differentiable process with $|f(t)|\leq C_{f}$\,, for some constant $C_{f}>0$\,,  and the time derivative process $f_{t}(t)$ is a bounded stationary process with $\mathbb{E}f_{t}=0$ and the mixing rate is  exponential\,. That is,
  \begin{equation*}
    \sup_{s\geq 0}\sup_{U\in\mathcal{F}_0^s\,, V\in\mathcal{F}_{s+t}^\infty}|\mathbb{P}(U\cap V)-\mathbb{P}(U)\mathbb{P}(V)| \leq  e^{-\lambda t}\,, \quad t\geq 0,
  \end{equation*}
  where $0\leq s\leq t\leq\infty$\,, $\lambda>0$, and  $\mathcal{F}_s^t=\sigma\{f_{t}(\tau): s\leq \tau\leq t\}$\, is the $\sigma$-algebra generated by $\{f_{t}(\tau): s\leq \tau\leq t\}$\,.
\end{description}

\begin{remark}
A simple example of such $f_{t}$ is the stationary solution of the following linear stochastic equation
\begin{equation*}
d\eta=-\lambda\eta\,dt+dB(t),
\end{equation*}
where $B(t)$ is a standard scalar Brownian motion.
\end{remark}

\begin{remark}\label{rem:DBC}
Taking  time derivative on the random boundary condition, we have
\begin{eqnarray*}
&&u^\e_t=u^\e_{xx} + g(t/\e, u^\e)  \,, \quad u^\e(x, 0)=u_0\\
&&u^\e_{t}(0, t) = \tfrac{1}{\sqrt{\e}} f_{t}(t/\e), \;  \quad u^\e_{t}(l, t)=0,
\end{eqnarray*}
which is a   system with a random dynamical boundary condition.
\end{remark}

To `homogenize' the inhomogeneous  boundary condition in the system (\ref{rpde888})--(\ref{rpde888-bc}),  we transform the random boundary condition to the field  equation by introducing a new random field $\hat u^\e=u^\e-\sqrt{\e}f(t/\e)(1-\frac{x}{l})$. Then, $\hat
u^\e_{xx}=u^\e_{xx}$ and the system (\ref{rpde888})--(\ref{rpde888-bc}) becomes
\begin{eqnarray}
&&\hat u^\e_t=\hat u^\e_{xx}+g(t/\e, u^{\e})-\tfrac{1}{\sqrt{\e}}f_t(t/\e)(1-\tfrac{x}{l}), \label{new888} \\
&&{}\hat u^\e(x, 0)=u_0-\sqrt{\e}f(0)(1-\tfrac{x}{l}),\\ &&{}\hat u^\e(0,t)=0,\,\, \hat u^\e(l,t)=0,
\end{eqnarray}
which is a random system with homogeneous boundary conditions.  By the assumption (\textbf{H}$_{f}$), $f$  is bounded. Thus  for every $t\geq 0$ and $x\in(0, l)$\,,
\begin{equation}\label{e:hatu-u}
\hat{u}^{\e}-u^{\e}=\sqrt{\e}f(t/\e)(1-\tfrac{x}{l})\rightarrow 0\,, \quad \e\rightarrow 0\,.
\end{equation}
So in the following subsections, we consider $\hat{u}^{\e}$\,, and derive an ensemble averaged equation to be satisfied by the limit of $\hat{u}^{\e}$\,.

\subsection{Tightness}\label{sec:tight}

In this section, we examine the tightness of the distribution of $\hat{u}^{\e}$ in space of continuous functions, $C(t_0, T; H)$, for all fixed $T>t_0>0$\,.

 In the mild or integral formulation, the equation (\ref{new888}) becomes
\begin{equation*}
\hat{u}^{\e}(t)=S(t)\hat{u}^{\e}(0)+\int_{0}^{t}S(t-s)g(\tfrac{s}{\e}, u^{\e}(s))\,ds-\tfrac{1}{\sqrt{\e}}\int_{0}^{t}S(t-s)f_{t}(\tfrac{s}{\e})(1-\tfrac{x}{l})\,ds\,.
\end{equation*}
By the properties of the semigroup $S(t)$, we have
\begin{equation*}
\|\hat{u}^{\e}(t)\|_{0}\leq \|\hat{u}^{\e}(0)\|_{0}+\int_{0}^{t}\|g(\tfrac{s}{\e}, u^{\e}(s))\|_{0}\,ds+\tfrac{1}{\sqrt{\e}}\left\|\int_{0}^{t}S(t-s)f_{t}(\tfrac{s}{\e})(1-\tfrac{x}{l}) ds  \right\|_{0}\,,
\end{equation*}
and by the assumption $(\textbf{H}_{g})$\,,
\begin{equation}\label{e:g-L0}
\|g(\tfrac{s}{\e}, u^{\e}(s))\|_{0}\leq L_{g}\|u^{\e}(s)\|_{0}\leq L_{g}(\|\hat{u}^{\e}(s)\|_{0}+\sqrt{\e l}C_{f})\,.
\end{equation}
Then we have,  for every $T>0$ and $0<t\leq T$\,,
\begin{eqnarray}
\mathbb{E}\sup_{0\leq s\leq t}\|\hat{u}^{\e}(s)\|_{0}&\leq& \|\hat{u}^{\e}(0)\|_{0}+L_{g}\int_{0}^{t}\sup_{0\leq r\leq s}\|\hat{u}^{\e}(r)\|_{0}\,ds\nonumber\\&&{}+C_{T,1}+\sup_{0\leq s\leq t}\|I^{\e}(s)\|_{0}, \label{e:E-u}
\end{eqnarray}
where
$C_{T,1}$ is a positive constant   depending only on $L_{g}$\,, $l$ and $C_{f}$\,, and
\begin{equation*}
I^{\e}(t):=\tfrac{1}{\sqrt{\e}}\int_{0}^{t}S(t-s)f_{t}(\tfrac{s}{\e})(1-\tfrac{x}{l})\,ds\,.
\end{equation*}
Next we treat the singular term $I^{\e}(t)$\,.
By the factorization method~\cite{PZ92}, for some $0<\alpha<1$\,,
\begin{equation*}
I^{\e}(t)=\tfrac{\sin\pi\alpha}{\alpha}\int_{0}^{t}(t-s)^{\alpha-1}S(t-s)Y^{\e}
(s)\,ds,
\end{equation*}
with
\begin{equation}\label{e:Y-eps}
Y^{\e}(s)=\tfrac{1}{\sqrt{\e}}\int_{0}^{s}(s-r)^{-\alpha}f_{t}(\tfrac{r}{\e})S(s-r)(1-\tfrac{x}{l})\,dr\,.
\end{equation}
Then, for every $T>0$, there is a positive constant $C_{T,2}$ such that
\begin{equation*}
\mathbb{E}\sup_{0\leq s\leq t}\|I^{\e}(s)\|^{2}_{0}\leq C_{T,2}\int_{0}^{t}\mathbb{E}\|Y^{\e}(s)\|^{2}_{0}\,ds\,,\quad 0\leq t\leq T\,.
\end{equation*}
Notice that
\begin{eqnarray*}
\mathbb{E}\|Y^{\e}(s)\|_{0}^{2}&=&\tfrac{1}{\e}
\int_{0}^{s}\int_{0}^{s}(s-r)^{-\alpha}(s-\tau)^{-\alpha}\mathbb{E}\left[f_{t}(\tfrac{r}{\e})f_{t}(\tfrac{\tau}{\e})\right]\\
&&{}\times S(s-r)(1-\tfrac{x}{l})S(s-\tau)(1-\tfrac{x}{l})\,drd\tau\,,
\end{eqnarray*}
by the assumption $(\textbf{H}_{f})$\,.  For every $T>0$\,, there is a positive constant $C_{T,3}$ such that for all $0\leq t\leq T$,
\begin{equation}\label{e:I-eps-0}
\mathbb{E}\sup_{0\leq s\leq t}\|I^{\e}(s)\|_{0}\leq C_{T,3}\,.
\end{equation}
Hence, for every $T>0$, applying the Gronwall inequality to (\ref{e:E-u})\,, we obtain
\begin{equation}
\mathbb{E}\sup_{0\leq t\leq T}\|\hat{u}^{\e}(t)\|_{0}\leq C_{T}(1+\|\hat{u}^{\e}(0)\|_{0}),
\end{equation}
for some constant $C_{T}>0$\,. Furthermore,  from  the mild form of $\hat{u}^{\e}$\,, by the fact that $\|S(t)u\|_{1}\leq \frac{1}{\sqrt{t}}\|u\|_{0}$\,, we have
\begin{equation}\label{e:u-H1}
\|\hat{u}^{\e}(t)\|_{1}\leq \tfrac{1}{\sqrt{t}}\|\hat{u}^\e(0)\|_{0}+\int_{0}^{t}\tfrac{1}{\sqrt{t-s}}\|g(\tfrac{s}{\e}, u^{\e}(s))\|_{0}\,ds+\|I^{\e}(s)\|_{1}\,.
\end{equation}
We now consider the term $\|I^{\e}(s)\|_{1}$\,.  Still by the factorization method,
\begin{eqnarray*}
\|I^{\e}(t)\|_{1}&\leq & \tfrac{\sin\alpha}{\pi}\int_{0}^{t}(t-s)^{\alpha-1}\|S(t-s)Y^{\e}(s)\|_{1}\,ds \\
&\leq &\tfrac{\sin\alpha}{\pi}\int_{0}^{t}(t-s)^{\alpha-1}\tfrac{1}{\sqrt{t-s}}\| Y^{\e}(s)\|_{0}\,ds,
\end{eqnarray*}
where $Y^{\e}(s)$ is defined by (\ref{e:Y-eps})\,.
Then, choose $\alpha$ with $1/2<\alpha<1$, and by the same discussion for (\ref{e:I-eps-0}), we conclude that for every $T>0$
\begin{equation*}
\mathbb{E}\|I^{\e}(t)\|_{1}\leq C_{T,5}\,, \quad 0\leq t\leq T
\end{equation*}
for some constant $C_{T,5}>0$\,. Then for $t_{0}>0$, from
(\ref{e:g-L0}) and (\ref{e:u-H1}), and by Gronwall inequality  we have
\begin{equation}
\mathbb{E}\|\hat{u}^{\e}(t)\|_{1}\leq  C_{T}\,,\quad t_{0}\leq t\leq T,
\end{equation}
for some constant $C_{T}>0$\,.

To show the tightness of the distributions of $\hat{u}^{\e}$, we need a   H\"older estimate in time.  For every $0\leq s<t\leq T$\,,
\begin{eqnarray*}
&&\|\hat{u}^{\e}(t)-\hat{u}^{\e}(s)\|_{0}\\&\leq&
\|(S(t)-S(s))\hat{u}^{\e}(0)\|_{0}+\left\|\int_{s}^{t}S(t-\sigma)g(\tfrac{\sigma}{\e}, u^{\e}(\sigma))\,d\sigma\right\|_{0}\\&&{}
+\tfrac{1}{\sqrt{\e}}\left\|\int_{s}^{t}S(t-\sigma)f_{t}(\tfrac{\sigma}{\e})(1-\tfrac{x}{l})\,d\sigma \right\|_{0}\\&&{}+\left\|\int_{0}^{s}[S(t-\sigma) -S(s-\sigma)]g(\tfrac{\sigma}{\e}, u^{\e}(\sigma))\,d\sigma\right\|_{0}\\&&{}
\tfrac{1}{\sqrt{\e}}\left\|\int_{0}^{s}[S(t-\sigma) -S(s-\sigma)]f_{t}(\tfrac{\sigma}{\e})(1-\tfrac{x}{l})\,d\sigma \right\|_{0}\,.
\end{eqnarray*}
By the estimate on $\|\hat{u}^{\e}(t)\|_{0}$ and   (\ref{e:g-L0})\,, we have for some constant $C_{T}>0$\,,
\begin{eqnarray*}
\mathbb{E}\left\|\int_{s}^{t}S(t-\sigma)g(\tfrac{\sigma}{\e}, u^{\e}(\sigma))\,d\sigma\right\|_{0}\leq C_{T}\sqrt{t-s}.
\end{eqnarray*}
Moreover, by the  strong continuity of the semigroup $S(t)$, we also  have
\begin{equation*}
\mathbb{E}\left\|\int_{0}^{s}[S(t-\sigma) -S(s-\sigma)]g(\tfrac{\sigma}{\e}, u^{\e}(\sigma))\,d\sigma\right\|_{0}\leq C_{T}\sqrt{t-s}\,.
\end{equation*}
Now consider for the singular terms. First notice that $(1-x/l)$ is smooth. We have
\begin{equation*}
S(t-\sigma)(1-\tfrac{x}{l})\in L^{\infty}(0, l)\,.
\end{equation*}
Therefore,
\begin{eqnarray*}
&&\mathbb{E}\tfrac{1}{\e}\left\|\int_{s}^{t}\int_{s}^{t}S(t-\sigma)f_{t}(\tfrac{\sigma}{\e})(1-\tfrac{x}{l})\,d\sigma \right\|^{2}_{0}\\&=&
\int_{s}^{t}\int_{s}^{t}\tfrac{1}{\e}\mathbb{E}[f_{t}(\tfrac{\sigma}{\e})f_{t}(\tfrac{\tau}{\e})]\int_{0}^{l}S(t-\sigma)(1-\tfrac{x}{l})S(t-\tau)(1-\tfrac{x}{l})\,dx\,d\sigma d\tau\\
&\leq &C_{l, T}\int_{s}^{t}\int_{s}^{t}\tfrac{1}{\e}\mathbb{E}[f_{t}(\tfrac{\sigma}{\e})f_{t}(\tfrac{\tau}{\e})]\,d\sigma d\tau,
\end{eqnarray*}
for a positive constant $C_{l, T}$   depending on $T$ and $l$\,. Now by $(\textbf{H}_{f})$, we have
\begin{equation*}
\mathbb{E}\tfrac{1}{\sqrt{\e}}\left\|\int_{s}^{t}S(t-\sigma)f_{t}(\tfrac{\sigma}{\e})(1-\tfrac{x}{l})\,d\sigma \right\|_{0}\leq C_{l, T}(t-s)\,.
\end{equation*}
Furthermore,
\begin{eqnarray*}
&&\mathbb{E}\tfrac{1}{\e}\left\|\int_{0}^{s}[S(t-\sigma) -S(s-\sigma)]f_{t}(\tfrac{\sigma}{\e})(1-\tfrac{x}{l})\,d\sigma \right\|^{2}_{0}\\
&=&\tfrac{1}{\e}\sum_{k}l_{k}\int_{0}^{s}\int_{0}^{s}\mathbb{E}f_{t}(\tfrac{\sigma}{\e})f_{t}(\tfrac{\tau}{\e}) [e^{-\lambda_{k}(t-\sigma)}-e^{-\lambda_{k}(s-\sigma)}]\\&&{}\times [e^{-\lambda_{k}(t-\tau)}-e^{-\lambda_{k}(s-\tau)}]\,d\sigma\,d\tau,
\end{eqnarray*}
where
\begin{equation*}
l_{k}=\int_{0}^{l}(1-\tfrac{x}{l})e_{k}(x)\,dx\,.
\end{equation*}
Then still by $(\textbf{H}_{f})$,  we have for some constant $C_{T}>0$
\begin{equation*}
\mathbb{E}\tfrac{1}{\sqrt{\e}}\left\|\int_{0}^{s}[S(t-\sigma) -S(s-\sigma)]f_{t}(\tfrac{\sigma}{\e})(1-\tfrac{x}{l})\,d\sigma \right\|_{0}\leq C_{T}(t-s)\,.
\end{equation*}

Now we need the following lemma~\cite{Mai03}. Suppose $\mathcal{X}_{1}$~and~$\mathcal{X}_{2}$ are two Banach spaces. Let $T>0$\,, $1\leq p\leq \infty$\,, and $\mathcal{B}$~be a compact operator from~$\mathcal{X}_{1}$ to~$\mathcal{X}_{2}$. That is, $\mathcal{B}$~maps bounded subsets of~$\mathcal{X}_{1}$ to relatively compact subsets of~$\mathcal{X}_{2}$.
\begin{lemma}[\cite{Mai03}]\label{lem:compact}
Let $\mathcal{H}$ be a bounded subset of $L^{1}(0, T; \mathcal{X}_{1})$ such that $G=\mathcal{B}\mathcal{H}$ is a subset of $L^{p}(0, T; \mathcal{X}_{2})$ bounded in $L^{r}(0, T; \mathcal{X}_{2})$ with $r>1$\,. If
\begin{equation*}
\lim_{\sigma\rightarrow 0}\|u(\cdot+\sigma)-u(\cdot)\|_{L^{p}(0, T; \mathcal{X}_{2})}=0 \quad \text{uniformly for } u\in G\,,
\end{equation*}
then $G$~is relatively compact in $L^{p}(0, T; \mathcal{X}_{2})$ (and in $C(0, T; \mathcal{X}_{2})$ if $p=+\infty$).
\end{lemma}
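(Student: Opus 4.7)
The plan is to view this statement as a Banach-valued Kolmogorov--Riesz--Fr\'echet compactness criterion of Simon type, in which the compactness of $\mathcal{B}$ furnishes the crucial ``pointwise'' compactness input while the translation hypothesis provides the temporal regularity. Specifically, I would reduce the proof to verifying the standard characterization of relative compactness in $L^p(0, T; \mathcal{X}_2)$: (i) $G$ is equi-continuous under time translations in the $L^p$-norm, and (ii) for every $0 < t_1 < t_2 < T$, the set $\bigl\{\int_{t_1}^{t_2} u(t)\,dt : u \in G\bigr\}$ is relatively compact in $\mathcal{X}_2$. Condition (i) is the hypothesis; the whole content of the lemma is that (ii) can be extracted from the structural assumption $G = \mathcal{B}\mathcal{H}$.

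For condition (ii), I would write any $u \in G$ as $u = \mathcal{B}v$ with $v \in \mathcal{H}$. Because $\mathcal{B}$ is a bounded linear operator, it commutes with the Bochner integral, so $\int_{t_1}^{t_2} u(t)\,dt = \mathcal{B}\bigl(\int_{t_1}^{t_2} v(t)\,dt\bigr)$. The boundedness of $\mathcal{H}$ in $L^1(0, T; \mathcal{X}_1)$ makes $\bigl\{\int_{t_1}^{t_2} v(t)\,dt : v \in \mathcal{H}\bigr\}$ a bounded subset of $\mathcal{X}_1$, and compactness of $\mathcal{B}$ then sends this into a relatively compact subset of $\mathcal{X}_2$.

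With (i) and (ii) in hand, I would introduce the time-averaging mollification $u^\delta(t) := \delta^{-1}\int_t^{t+\delta} u(s)\,ds$, defined on $[0, T-\delta]$. Item (ii) (applied to $[t, t+\delta]$) shows that the pointwise image $\{u^\delta(t) : u \in G,\ t \in [0, T-\delta]\}$ lies in a single relatively compact subset of $\mathcal{X}_2$, while item (i) yields equi-continuity of $t \mapsto u^\delta(t)$ uniformly in $u \in G$. The Ascoli--Arzel\`a theorem then gives that $\{u^\delta : u \in G\}$ is relatively compact in $C([0, T-\delta]; \mathcal{X}_2)$, hence a fortiori in $L^p([0, T-\delta]; \mathcal{X}_2)$. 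A standard estimate from (i) shows $u^\delta \to u$ in $L^p(0, T; \mathcal{X}_2)$ as $\delta \to 0$ uniformly for $u \in G$, so $G$ is totally bounded and therefore relatively compact in $L^p(0, T; \mathcal{X}_2)$. The $p=+\infty$ case follows the same outline, with uniform convergence $u^\delta \to u$ in $C(0, T; \mathcal{X}_2)$ replacing the $L^p$ limit.

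The main obstacle I anticipate is making the approximation $u^\delta \to u$ uniform in $u \in G$ up to the right endpoint: one has to control $\|u\|_{\mathcal{X}_2}$ on a strip of width $\delta$ near $T$ uniformly in $u$, and this is precisely where the assumption $r > 1$ (rather than merely $r = 1$) enters, through equi-integrability of $\|u(\cdot)\|_{\mathcal{X}_2}^p$ via H\"older's inequality. A secondary technical point is the justification that $\mathcal{B}$ commutes with the Bochner integral on $L^1$-valued integrands, which is standard but must be invoked explicitly. Once these two issues are settled, the rest of the argument is largely a bookkeeping exercise within the Simon framework.
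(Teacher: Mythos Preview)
The paper does not prove this lemma at all: it is quoted verbatim from Maitre~\cite{Mai03} and used as a black box to obtain the tightness result (Lemma~\ref{tight888}). So there is no ``paper's own proof'' to compare against.

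That said, your proposal is the correct strategy and is essentially the one in Maitre's note (which in turn is a direct adaptation of Simon's classical argument). The two ingredients you isolate---compactness of the averaged set $\{\int_{t_1}^{t_2} u\,dt : u\in G\}$ via the factorization $u=\mathcal{B}v$ and the $L^1(0,T;\mathcal{X}_1)$-bound on $\mathcal{H}$, together with the translation hypothesis---are exactly what drive the proof, and your mollification-plus-Ascoli step is standard. Your identification of where $r>1$ enters (equi-integrability of $\|u(\cdot)\|_{\mathcal{X}_2}^p$ near the endpoint via H\"older) is also correct. One small point worth making explicit in the $p=+\infty$ case: the uniform translation hypothesis in the $L^\infty$-norm forces every $u\in G$ to admit a (uniformly) continuous representative, which is what lets you land in $C(0,T;\mathcal{X}_2)$ rather than merely $L^\infty$.
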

Then by the above lemma, with $\mathcal{X}_{1}=H_{0}^{1}(0, l)$ and $\mathcal{X}_{2}=L^{2}(0, l)$ and $\mathcal{B}$ being the embedding from $\mathcal{X}_{1}$ to $\mathcal{X}_{2}$, we have the following tightness result.

\begin{lemma} \label{tight888} (Tightness)\\
Assume that both $(\textbf{H}_{g})$ and  $(\textbf{H}_{f})$ hold. For every $0<t_{0}<T$, the distribution of $\{\hat{u}^{\e}\}_{0<\e\leq 1}$ is tight in space $C(t_{0}, T; H)$\,.
\end{lemma}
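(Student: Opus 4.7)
My plan is to apply Lemma~\ref{lem:compact} with $\mathcal{X}_1 = H_0^1(0,l)$, $\mathcal{X}_2 = H = L^2(0,l)$, $\mathcal{B}$ the compact Rellich embedding, and $p = +\infty$. For any constant $M>0$ and any deterministic function $\phi$ with $\phi(\sigma)\to 0$ as $\sigma \to 0^+$, that lemma then guarantees that the set
\begin{equation*}
K_{M,\phi} = \Bigl\{u \in C(t_0, T; H) :\ \int_{t_0}^T \|u(t)\|_1\,dt \leq M,\ \ \sup_{|t-s|\leq \sigma}\|u(t) - u(s)\|_0 \leq \phi(\sigma)\ \forall \sigma > 0 \Bigr\}
\end{equation*}
is relatively compact in $C(t_0, T; H)$: the first inequality places $K_{M,\phi}$ inside the $\mathcal{B}$-image of a bounded subset of $L^1(t_0, T; H_0^1)$, and the second delivers the $L^\infty$-equicontinuity required at $p = +\infty$. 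It therefore suffices to exhibit, for each $\delta > 0$, a choice of $M_\delta$ and $\phi_\delta$ with $\PX(\hat u^\e \in K_{M_\delta,\phi_\delta}) \geq 1-\delta$ uniformly in $0 < \e \leq 1$.

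Controlling the first constraint is routine. The pointwise bound $\EX\|\hat u^\e(t)\|_1 \leq C_T$ on $[t_0, T]$ integrated via Fubini gives $\EX\int_{t_0}^T \|\hat u^\e(t)\|_1\,dt \leq C_T(T-t_0)$, and Markov's inequality with $M_\delta := 2C_T(T-t_0)/\delta$ yields $\PX\bigl(\int_{t_0}^T \|\hat u^\e\|_1\,dt > M_\delta\bigr) \leq \delta/2$ independently of $\e$.

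The main obstacle is the construction of the modulus $\phi_\delta$. The H\"older-type bounds derived above give $\EX\|\hat u^\e(t) - \hat u^\e(s)\|_0 \leq C_T(|t-s|^{1/2}+|t-s|)$ at each fixed pair $s<t$, which is not enough to control the supremum over all pairs with $|t-s|\leq \sigma$. I would first upgrade these to a $p$-th moment bound of the form $\EX\|\hat u^\e(t) - \hat u^\e(s)\|_0^p \leq C_{T,p}|t-s|^{p/2}$ for some $p>2$: the Gronwall step in (\ref{e:E-u}) extends to $\EX\sup_{0\leq r\leq \cdot}\|\cdot\|_0^p$ without essential change, and the factorization estimates for $I^\e$ carry over to higher moments using the covariance $\EX[f_t(r/\e)f_t(\tau/\e)]$ provided by $(\textbf{H}_{f})$ (exponential mixing plus boundedness give arbitrary-order moment control of the stochastic convolution). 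The Kolmogorov--Chentsov continuity criterion then produces, for each $\e$, a H\"older continuous version of $\hat u^\e$ in $C(t_0,T;H)$ of exponent $\theta \in (0, 1/2 - 1/p)$ whose random prefactor $C(\omega)$ satisfies $\sup_\e \EX|C|^p < \infty$. A final application of Markov converts this into a deterministic modulus $\phi_\delta(\sigma) = C^\ast_\delta\,\sigma^\theta$ chosen so that $\PX(\omega_{\hat u^\e} \not\leq \phi_\delta) \leq \delta/2$ uniformly in $\e$. A union bound with the event from the preceding paragraph yields $\PX(\hat u^\e \in K_{M_\delta,\phi_\delta}) \geq 1-\delta$, which is the claimed tightness.
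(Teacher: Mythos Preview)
Your approach is the same as the paper's: apply Lemma~\ref{lem:compact} with $\mathcal{X}_1=H_0^1(0,l)$, $\mathcal{X}_2=L^2(0,l)$, $\mathcal{B}$ the Rellich embedding, and $p=+\infty$, using the $H^1$ bound for spatial compactness and the time-increment estimates for equicontinuity. The paper derives only first-moment increment bounds $\EX\|\hat u^\e(t)-\hat u^\e(s)\|_0\leq C_T\sqrt{t-s}$ and then invokes Lemma~\ref{lem:compact} without further comment; you correctly observe that this does not control $\sup_{|t-s|\leq\sigma}\|\hat u^\e(t)-\hat u^\e(s)\|_0$ and supply the missing step by upgrading to $p$-th moments (legitimate here since $f_t$ is bounded and exponentially mixing, so the rescaled integrals obey $\EX|\cdot|^p\leq C_p|t-s|^{p/2}$) and invoking Kolmogorov--Chentsov, which is exactly the standard way to close this gap.
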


\subsection{Ensemble averaging}\label{sec:ens-AVE}

Next we use the weak convergence method~\cite{Kushner} to pass the limit $\e\rightarrow 0$\,.  In this approach we construct a martingale which has the following form
\begin{equation*}
\Phi(t)-\int_{0}^{t}A^{\e}\Phi(s)\,ds,
\end{equation*}
for some $\mathcal{F}_{0}^{t}$-process $\Phi(t)$ defined by $z_{1}^{\e}(t)$ and $A^{\e}$\,, which is a pseudo differential operator to be introduced later.

Because of the tightness of $\hat{u}^{\e}$ in space $C(t_{0}, T; H)$ for every fixed $t_{0}>0$\,,  in order to determine the limit equation of $\hat{u}^{\e}$ in space $C(t_{0}, T; H)$, we   consider the limit of $\Phi(\langle \hat{u}^{\e}(t), \varphi\rangle)$, for every bounded second order differentiable function $\Phi: \R\rightarrow\R$ and  for every compactly supported smooth function $\varphi\in C^{\infty}_{b}(0, l)$\,.

First, we have
\begin{eqnarray}
&&\Phi(\langle \hat{u}^{\e}(t), \varphi\rangle)-\Phi(\langle u_{0}, \varphi\rangle)  \nonumber  \\
&=&\int_{0}^{t}\Phi'(\langle \hat{u}^{\e}(s), \varphi\rangle)\langle \hat{u}^{\e}(s), \varphi_{xx}\rangle\,ds+\int_{0}^{t}\Phi'(\langle \hat{u}^{\e}(s), \varphi\rangle)\langle g(\tfrac{s}{\e}, \hat{u}^{\e}(s)), \varphi\rangle\,ds  \nonumber \\
&&\quad{}-\tfrac{1}{\sqrt{\e}}\int_{0}^{t}\Phi'(\langle \hat{u}^{\e}(s), \varphi\rangle)\langle f_{t}(\tfrac{s}{\e})(1-\tfrac{x}{l}), \varphi\rangle\,ds\,. \label{martingale888}
\end{eqnarray}
To treat the singular term in (\ref{martingale888}), we apply a perturbation method in~\cite[Chapter 7]{Kushner}.  To this end, we define the following two processes
\begin{equation}
F_{0}^{\e}(t) :=\tfrac{1}{\sqrt{\e}} \int_{t}^{\infty}
\mathbb{E}[f_{t}^{\e}(\tfrac{s}{\e})|\mathcal{F}_{0}^{t/\e}]ds
\end{equation}
and
\begin{eqnarray}
F^{\e}_{1}(t)&:=&\tfrac{1}{\sqrt{\e}}\mathbb{E}\left[\int_{t}^{\infty}\Phi'(\langle \hat{u}^{\e}(t), \varphi\rangle)  \langle f_{t}(\tfrac{s}{\e})(1-\tfrac{x}{l}), \varphi\rangle\,ds \Big|\mathcal{F}^{t/\e} _{0}\right]\nonumber\\
&=&\Phi'(\langle \hat{u}^{\e}(t), \varphi\rangle)  \langle 1-\tfrac{x}{l}, \varphi\rangle F_{0}^{\e}(t).
\end{eqnarray}
Then we have the following lemma.
\begin{lemma}\label{lem:est-F-eps}
Assume that {\rm $(\textbf{H}_{f})$} holds.  Then
\begin{equation*}
\mathbb{E}|F^{\e}_{1}(t)|\leq C\sqrt{\e},
\end{equation*}
for some constant $C>0$ and
\begin{equation*}
\mathbb{E}\sup_{0\leq t\leq T}|F^{\e}_{1}(t)|\rightarrow 0\,,\quad \e\rightarrow 0\,,
\end{equation*}
for every $T>0$\,.
\end{lemma}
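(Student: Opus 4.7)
The plan is to reduce the lemma to an estimate on the scalar quantity $F_0^\e(t)$ and then invoke the mixing hypothesis $(\textbf{H}_f)$ directly on the conditional expectation defining it.

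First I would observe that, since $\Phi'$ is bounded (say by $M_\Phi$) and $c_\varphi:=\langle 1-\tfrac{x}{l},\varphi\rangle$ is a fixed scalar, the definition of $F_1^\e$ yields
$$|F_1^\e(t)|\le M_\Phi\,|c_\varphi|\,|F_0^\e(t)|,$$
so both conclusions reduce to controlling $F_0^\e$. To estimate $F_0^\e(t)$ I would change variables $\tau=s/\e$ in the defining integral to obtain
$$F_0^\e(t)=\sqrt{\e}\int_{t/\e}^{\infty}\mathbb{E}\bigl[f_t(\tau)\bigm|\mathcal{F}_0^{t/\e}\bigr]\,d\tau,$$
and then invoke the standard consequence of strong mixing: since $f_t$ is bounded and centered, hypothesis $(\textbf{H}_f)$ supplies a constant $K>0$ such that
$$\mathbb{E}\bigl|\mathbb{E}[f_t(\tau)\mid \mathcal{F}_0^{t/\e}]\bigr|\le K\,e^{-\lambda(\tau-t/\e)},\qquad \tau\ge t/\e.$$
(This is the dual characterization $\mathbb{E}|\mathbb{E}[\xi|\mathcal{G}]|=\mathbb{E}[\mathrm{sgn}(\mathbb{E}[\xi|\mathcal{G}])\,\xi]$ combined with the $\alpha$-mixing covariance inequality applied to the bounded, $\mathcal{G}$-measurable sign.) Integrating in $\tau$ yields $\mathbb{E}|F_0^\e(t)|\le K\lambda^{-1}\sqrt{\e}$, which is the first assertion $\mathbb{E}|F_1^\e(t)|\le C\sqrt{\e}$.

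For the supremum statement one cannot simply exchange $\sup$ and $\mathbb{E}$, so my plan is to upgrade the first estimate to higher moments together with a matching time-H\"older bound and then invoke a Kolmogorov-type continuity argument. An $L^2$ version of the mixing inequality, namely $\mathbb{E}|\mathbb{E}[f_t(\tau)|\mathcal{F}_0^{t/\e}]|^2\le 4\|f_t\|_\infty^2 e^{-\lambda(\tau-t/\e)}$, followed by a Cauchy--Schwarz splitting of the double integral defining $\mathbb{E}|F_0^\e(t)|^{2p}$, would give
$$\mathbb{E}|F_0^\e(t)|^{2p}\le C_p\,\e^{p}.$$
By a parallel computation---separating $F_0^\e(t)-F_0^\e(s)$ into the piece coming from the differing lower limits $t/\e$ versus $s/\e$, and the piece coming from replacing the conditioning $\mathcal{F}_0^{t/\e}$ by $\mathcal{F}_0^{s/\e}$---I would establish
$$\mathbb{E}|F_0^\e(t)-F_0^\e(s)|^{2p}\le C_p\,\e^{p}\,|t-s|^{\beta}$$
for some $\beta>0$ and all sufficiently large $p$. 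Kolmogorov's continuity criterion then upgrades these to $\mathbb{E}\sup_{0\le t\le T}|F_0^\e(t)|\le C_T\,\e^{1/2-\delta}$ for arbitrarily small $\delta>0$, which is more than enough to conclude.

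The main obstacle I anticipate is this H\"older-in-$t$ increment bound: the conditioning $\sigma$-algebra $\mathcal{F}_0^{t/\e}$ moves with $t$, so one must carefully isolate the fast dependence (through the argument $t/\e$ of $f_t$) from the slow one (through the conditioning), reapplying the mixing inequality to each contribution before recombining them via Minkowski and Cauchy--Schwarz. The pointwise bound $\mathbb{E}|F_0^\e(t)|\le C\sqrt{\e}$ itself is essentially immediate once one has changed variables; the genuine work lies in getting that $\sqrt{\e}$ decay uniformly along the whole trajectory.
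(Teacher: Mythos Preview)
Your reduction to $F_0^\e$ and your change-of-variables computation giving $\mathbb{E}|F_0^\e(t)|\le C\sqrt{\e}$ are exactly what the paper has in mind for the first conclusion. The divergence is in the second conclusion. The paper does \emph{not} run a Kolmogorov argument: it asserts the almost-sure, pointwise bound $|F_0^\e(t)|\le C\sqrt{\e}$ (this is the displayed estimate labeled $(e{:}F\text{-}0)$), and then $\mathbb{E}\sup_{0\le t\le T}|F_1^\e(t)|\le M_\Phi\,|c_\varphi|\,C\sqrt{\e}\to 0$ is immediate. In other words, the paper upgrades your $L^1$ estimate on $F_0^\e$ to an $L^\infty(\Omega)$ estimate, and that single upgrade replaces your entire continuity program.

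That pointwise bound comes from the uniform-mixing inequality $\bigl|\mathbb{E}[f_t(\tau)\mid\mathcal{F}_0^{t/\e}]\bigr|\le 2\|f_t\|_\infty\,\phi(\tau-t/\e)$ a.s., which integrates to $C\sqrt{\e}$; strictly speaking this uses $\phi$-mixing rather than the $\alpha$-mixing literally written in $(\textbf{H}_f)$, so your moment-based route is in a sense more faithful to the stated hypothesis. What it buys you, however, is a much longer argument whose hardest step---your proposed increment bound $\mathbb{E}|F_0^\e(t)-F_0^\e(s)|^{2p}\le C_p\,\e^{p}|t-s|^{\beta}$---is not obviously true as written: the conditioning $\sigma$-algebra jumps by $(t-s)/\e$, so the natural increment scale is $(t-s)/\e$ rather than $t-s$, and pulling out both a positive power of $\e$ and a positive power of $|t-s|$ simultaneously requires an interpolation that you have not yet supplied. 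If you want to keep the $\alpha$-mixing route, you should expect to prove instead a bound of the form $\mathbb{E}|F_0^\e(t)-F_0^\e(s)|^{2p}\le C_p\min\bigl(\e^{p},\,|t-s|^{p}/\e^{p}\bigr)$ and combine it with the pointwise-in-$t$ moment bound; otherwise, simply adopt the paper's pointwise estimate and the lemma is one line.
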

\begin{proof}
By the boundedness of $f_{t}$ and the strong mixing property, we have 
\begin{equation}\label{e:F-0}
|F_{0}^{\e}(t)|\leq C\sqrt{\e}
\end{equation}
for some constant $C>0$\,.
 Then by the choice of $\Phi$,  the proof is complete.
\end{proof}

Now we apply a diffusion approximation to derive the limit of $\hat{u}^{\e}$ in the sense of distribution. For this we introduce the following operator
\begin{equation}\label{e:A-eps}
A^{\e}\Phi(t)=\mathbb{P}\text{-}\lim_{\delta\rightarrow 0} \tfrac{1}{\delta}\mathbb{E} \left[\Phi(t+\delta)-\Phi(t)|\mathcal{F}_{0}^{t/\e}  \right]
\end{equation}
for  $\mathcal{F}_{0}^{t/\e}$ measurable function $\Phi(t)$ with $\sup_{t}\mathbb{E}|\Phi(t)|<\infty$\,.     Using   Ethier and Kurtz's result~\cite[Proposition 2.7.6]{EK86}\,, we know that
\begin{equation*}
\Phi(t)-\int_{0}^{t}A^{\e}\Phi(s)\,ds
\end{equation*}
is a martingale with respect to $\mathcal{F}_{0}^{t/\e}$\,.  Define processes $Y^{\e}$ and $Z^{\e}$ as follows
\begin{equation*}
Y^{\e}(t)=\Phi(\langle \hat{u}^{\e}(t), \varphi\rangle)-F^{\e}_{1}(t)\,,\quad Z^{\e}(t)=A^{\e}Y^{\e}(t)\,.
\end{equation*}
A direct calculation yields
\begin{eqnarray}\label{e:Z}
&&Z^{\e}(t)\\&=&\Phi'(\langle \hat{u}^{\e}(t), \varphi\rangle)\big[\langle \hat{u}^{\e}(t), \varphi_{xx}\rangle+\langle g(t/\e, u^{\e}(t)), \varphi\rangle\big]\nonumber\\
&&{}-\Phi''(\langle \hat{u}^{\e}(t), \varphi\rangle)\langle (1-\tfrac{x}{l}), \varphi\rangle^{2}\tfrac{1}{\sqrt{\e}} f_{t}(\tfrac{t}{\e}) F_{0}^{\e}(t)\nonumber\\&&{}+\Phi''(\langle \hat{u}^{\e}(t),\varphi\rangle )\Big[\langle \hat{u}^{\e}(t), \varphi_{xx}\rangle+\langle g(\tfrac{t}{\e}, u^{\e}(t)), \varphi\rangle\Big] \langle 1-\tfrac{x}{l}, \varphi\rangle F_{0}^{\e}(t)\nonumber\\
&:=&Z_{1}^{\e}(t)+Z_{2}^{\e}(t)+Z_{3}^{\e}(t)\,.\nonumber
\end{eqnarray}

Next we pass the limit $\e\rightarrow 0$ for $\hat{u}^{\e}(t)$ in space $C(t_{0}, T; H)$\,.  By the convergence result  of Walsh~\cite[Theorem~6.15]{Walsh86},  we only need to consider finite dimensional distributions of $\{\langle \hat{u}^{\e}(t), \varphi_{1}\rangle, \ldots, \langle \hat{u}^{\e}(t), \varphi_{n}\rangle\}$ for every $\varphi_{1}\,,\ldots\,, \varphi_{n}\in C_{b}^{\infty}(0, l)$\,. That is, we pass limit $\e\rightarrow 0$ in
\begin{equation*}
\mathbb{E}\left\{\left[Y^{\e}(t)-Y^{\e}(s)-\int_{s}^{t}Z^{\e}(r)\,dr\right]h(\langle \hat{u}^{\e}(r_{1}), \phi_{1}\rangle, \ldots, \langle \hat{u}^{\e}(r_{n}), \phi_{n} \rangle)\right\}=0
\end{equation*}
for every bounded continuous function $h$ and $0<r_{1}<\cdots <r_{n}<T$ with any $T>0$\,. Denote by $\hat{u}$ one limit point in the sense of distribution of $\hat{u}^{\e}$ as $\e\rightarrow 0$ in space $C(t_{0}, T; H)$\,.  For simplicity we assume $\hat{u}^{\e}$ converges in distribution to $\hat{u}$ as $\e\rightarrow 0$\,.
Then by the estimates in Lemma \ref{lem:est-F-eps} we have
\begin{equation}
Y^{\e}(t)-Y^{\e}(s)\rightarrow  \Phi(\langle \hat{u}(t), \phi\rangle)-\Phi(\langle \hat{u}(s), \phi\rangle)
\end{equation}
in distribution.

\medskip

Consider the integral term in (\ref{martingale888}).  First we need the following lemma whose proof is given in Appendix \ref{app:A1}.

\begin{lemma}\label{lem:A1}
The following convergence in probability holds:
\begin{equation*}
 \int_{0}^{t}\left[g(r/\e, u^{\e}(r))-\bar{g}(u^{\e}(r))\right]\,dr \rightarrow 0\,,\quad \e\rightarrow 0\,.
\end{equation*}
\end{lemma}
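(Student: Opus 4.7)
The plan is to use a classical block-discretization (``freezing'') argument combined with the strong mixing of $g(\cdot,u)$, exploiting the time regularity of $\hat u^\e$ established in Section~\ref{sec:tight}. Define $\bar g(u):=\mathbb{E}g(s,u)$, which by stationarity of $g(\cdot,u)$ is independent of $s$. Given $\e>0$, choose a scale $\Delta=\Delta(\e)$ with $\Delta\to 0$ and $\Delta/\e\to\infty$ (e.g.\ $\Delta=\sqrt{\e}$), and partition $[0,t]$ by $t_k=k\Delta$. I would decompose
\begin{eqnarray*}
\int_{0}^{t}[g(r/\e,u^{\e}(r))-\bar g(u^{\e}(r))]\,dr
&=&\sum_{k}\int_{t_{k}}^{t_{k+1}}[g(r/\e,u^{\e}(r))-g(r/\e,u^{\e}(t_{k}))]\,dr\\
&&{}+\sum_{k}\int_{t_{k}}^{t_{k+1}}[g(r/\e,u^{\e}(t_{k}))-\bar g(u^{\e}(t_{k}))]\,dr\\
&&{}+\sum_{k}\int_{t_{k}}^{t_{k+1}}[\bar g(u^{\e}(t_{k}))-\bar g(u^{\e}(r))]\,dr.
\end{eqnarray*}

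The first and third sums are handled by the Lipschitz hypothesis in (\textbf{H}$_{g}$): each integrand is bounded in $H$-norm by $L_{g}\|u^{\e}(r)-u^{\e}(t_{k})\|_{0}$, so the mean-square bound on $\|\hat u^{\e}(r)-\hat u^{\e}(t_{k})\|_{0}$ derived in Section~\ref{sec:tight} (together with (\ref{e:hatu-u})) yields an overall bound of order $\Delta^{1/2}$, which tends to $0$.

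For the middle sum, I would fix a block $[t_{k},t_{k+1}]$, rescale by $s=r/\e$, and condition on $\mathcal{F}_{0}^{t_{k}/\e}$, relative to which $u^{\e}(t_{k})$ is measurable. Writing $v=u^{\e}(t_{k})$, I would control
$$\mathbb{E}\left\|\e\int_{t_{k}/\e}^{t_{k+1}/\e}[g(s,v)-\bar g(v)]\,ds\right\|_{0}^{2}
=\e^{2}\int\!\!\int\mathbb{E}\langle g(s,v)-\bar g(v),g(\tau,v)-\bar g(v)\rangle\,ds\,d\tau,$$
and bound the covariance by Davydov's inequality, using the $L_{g}$-Lipschitz bound to get a uniform moment control and the exponential mixing rate $\gamma$ from (\textbf{H}$_{g}$) to get decay $\lesssim e^{-\gamma|s-\tau|}$. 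This gives a per-block bound of order $\e\Delta$, hence a total of order $\e\cdot(t/\Delta)\cdot\sqrt{\e\Delta}=\sqrt{\e/\Delta}\cdot t$ in $L^{1}$, which vanishes by the choice $\Delta/\e\to\infty$. Summing the pieces and applying Markov's inequality gives convergence in probability.

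The main obstacle is the $H$-valued nature of $g$: mixing is assumed for each fixed $u\in H$, but to pass from the conditional estimate back to an unconditional bound we need the covariance bound to be uniform in $v$ on the range of $u^{\e}(t_{k})$. The uniform moment bound $\mathbb{E}\sup_{0\le t\le T}\|\hat u^{\e}(t)\|_{0}\le C_{T}$ from Section~\ref{sec:tight} together with the linear growth $\|g(s,v)\|_{0}\le L_{g}\|v\|_{0}$ makes this uniformity tractable; without it the Davydov step would require delicate truncation and a compactness argument on the orbit of $u^{\e}$ in $H$.
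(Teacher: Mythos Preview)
Your block-freezing decomposition is exactly the structure the paper uses in Appendix~\ref{app:A1}: partition $[0,t]$ into subintervals of length~$\delta$, freeze $u^\e$ at the left endpoint, and control the freezing error via the Lipschitz bound and the time regularity of $u^\e$. The gap is in your treatment of the frozen middle term. The mixing hypothesis $(\textbf{H}_{g})$ is stated \emph{for each fixed deterministic $u\in H$}, with $\sigma$-algebras $\mathcal{G}_s^t=\sigma\{g(\tau,u):s\le\tau\le t\}$ depending on that fixed $u$. When you condition on $\mathcal{F}_0^{t_k/\e}$ so that $v=u^\e(t_k)$ becomes measurable, you are conditioning on a much larger $\sigma$-algebra: $u^\e(t_k)$ depends on $g(\tau,u^\e(\tau))$ along the whole random trajectory, hence on $g(\tau,\cdot)$ at uncountably many $u$-values, and on $f$. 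The assumption says nothing about the conditional law of $g(s,q)$ given this larger filtration, so the Davydov-type covariance decay you invoke does not follow. The obstruction is not uniformity of the covariance constant in $v$, as you diagnose in your last paragraph; it is that $v$ is correlated with the process $g$ through the equation, and $(\textbf{H}_{g})$ gives no joint mixing across different $u$-values.

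The paper resolves this precisely by the ``compactness argument on the orbit of $u^\e$'' that you flag as a fallback. One uses tightness of $\{u^\e\}$ in $C(0,T;H)$ to restrict, with probability $\ge 1-\kappa$, to a compact set $K_\kappa$; covers $K_\kappa$ by a finite $\varepsilon$-net $\{q_1,\ldots,q_N\}$ of \emph{deterministic} simple functions; and inserts two further Lipschitz terms to replace $u^\e(k\delta)$ by the nearest $q_j(k\delta)$. The mixing estimate~(\ref{e:Birkhoff}) is then applied only to the finitely many deterministic $q_j(k\delta)$, where $(\textbf{H}_{g})$ applies directly and the implied constants can be taken uniform by finiteness. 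This extra layer of approximation is not optional under the stated hypotheses --- it is the key device that converts the random frozen value into a deterministic one to which the mixing bound legitimately applies.
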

Then by this lemma, we have
\begin{equation}
\int_{s}^{t}Z_{1}(r)\,dr\rightarrow \int_{s}^{t}\Phi(\langle \hat{u}(r), \phi\rangle)\left[\langle \hat{u}(r), \phi_{xx} \rangle+\langle \bar{g}(\hat{u}(r)), \phi\rangle\right]\,dr
\end{equation}
in distribution as $\e\rightarrow 0$\,.
By the  the estimate~(\ref{e:F-0})\,, we have
\begin{equation}
\mathbb{E}\int_{s}^{t}|Z_{3}^{\e}(r)|\,dr\rightarrow 0\,.
\end{equation}
Now we consider $Z_{2}^{\e}(t)$\,. Define a bilinear operator
\begin{equation}\label{e:Sigma}
\langle\Sigma \phi, \phi \rangle=b\int_{0}^{l}\int_{0}^{l}(1-\tfrac{x}{l})\phi(x)(1-\tfrac{y}{l})\phi(y)\,dx\,dy,
\end{equation}
where $b$ is the variance of $f_t$, which is constant defined as 
\begin{equation}
b :=\mathbb{E}f_{t}(t)f_{t}(t)>0\,. \label{bbb}
\end{equation}
We again apply a perturbation method. Set
\begin{equation}
F_{2}^{\e}(t) :=-\Phi^{''}(\langle\hat{u}^{\e}(t), \phi \rangle)\langle(1-\tfrac{x}{l}), \phi\rangle^{2}\tfrac{1}{\sqrt{\e}}\int_{t}^{\infty}\mathbb{E}\big[ f_{t}(\tfrac{s}{\e})F_{0}^{\e}(s)-\tfrac12b\big|\mathcal{F}_{0}^{t/\e}\big]\,ds\,.
\end{equation}
By the properties of conditional expectation and the definition of $F_{0}^{\e}$\,,  we have
\begin{eqnarray*}
&&\tfrac{1}{\sqrt{\e}}\int_{t}^{\infty}\mathbb{E}\big[ f_{t}(\tfrac{s}{\e})F_{0}^{\e}(s)-\tfrac12b\big|\mathcal{F}_{0}^{t/\e}\big]\,ds\\&=&
\tfrac{1}{\e}\int_{t}^{\infty}\int_{s}^{\infty}\mathbb{E}\big[f_{t}(\tfrac{s}{\e})f_{t}(\tfrac{\tau}{\e})-\tfrac12 b|\mathcal{F}_{0}^{t/\e} \big]\,d\tau ds.
\end{eqnarray*}
Then, by the strong mixing  properties of $f_{t}$ in the assumption  $(\textbf{H}_{f})$\,, we have
\begin{equation*}
\sup_{t\geq 0}\mathbb{E}F_{2}^{\e}(t)=\mathcal{O}(\e)\,.
\end{equation*}
Furthermore by the same calculation as for $Z^{\e}(t)$, we have the following lemma.
\begin{lemma} 
The following result holds:
\begin{equation}
A^{\e}F_{2}^{\e}(t)=-\Phi''(\langle\hat{u}^{\e}(t), \phi \rangle)\langle (1-\tfrac{x}{l}), \phi\rangle^{2}\tfrac{1}{\sqrt{\e}}\big[f_{t}(\tfrac{t}{\e}) F_{0}^{\e}(t) -\tfrac12b\big]+\mathcal{O}(\e).
\end{equation}
\end{lemma}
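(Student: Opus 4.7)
The plan is to unfold the definition of $A^{\e}$ directly and exploit the product structure of $F_{2}^{\e}(t)$. Write $F_{2}^{\e}(t)=\Psi^{\e}(t)G^{\e}(t)$, where
\begin{equation*}
\Psi^{\e}(t):=-\Phi''(\langle\hat{u}^{\e}(t),\phi\rangle)\langle 1-\tfrac{x}{l},\phi\rangle^{2},\qquad G^{\e}(t):=\tfrac{1}{\sqrt{\e}}\int_{t}^{\infty}\mathbb{E}\big[f_{t}(\tfrac{s}{\e})F_{0}^{\e}(s)-\tfrac12 b\,\big|\,\mathcal{F}_{0}^{t/\e}\big]\,ds.
\end{equation*}
The operator $A^{\e}$ acts essentially as a derivative at the lower limit of integration in $G^{\e}$, whereas the variation of $\Psi^{\e}$, when multiplied by $G^{\e}$, is expected to produce only the $\mathcal{O}(\e)$ remainder.

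First I would apply the telescoping identity
\begin{equation*}
F_{2}^{\e}(t+\delta)-F_{2}^{\e}(t)=\Psi^{\e}(t)\big[G^{\e}(t+\delta)-G^{\e}(t)\big]+\big[\Psi^{\e}(t+\delta)-\Psi^{\e}(t)\big]G^{\e}(t+\delta),
\end{equation*}
then take the conditional expectation with respect to $\mathcal{F}_{0}^{t/\e}$, divide by $\delta$ and let $\delta\to 0$. For the first, dominant term, $\Psi^{\e}(t)$ is $\mathcal{F}_{0}^{t/\e}$-measurable and factors out; the tower property, together with the monotonicity $\mathcal{F}_{0}^{t/\e}\subset\mathcal{F}_{0}^{(t+\delta)/\e}$, gives
\begin{equation*}
\mathbb{E}\big[G^{\e}(t+\delta)-G^{\e}(t)\,\big|\,\mathcal{F}_{0}^{t/\e}\big]=-\tfrac{1}{\sqrt{\e}}\int_{t}^{t+\delta}\mathbb{E}\big[f_{t}(\tfrac{s}{\e})F_{0}^{\e}(s)-\tfrac12 b\,\big|\,\mathcal{F}_{0}^{t/\e}\big]\,ds.
\end{equation*}
Dividing by $\delta$ and applying the fundamental theorem of calculus, using that the integrand evaluated at $s=t$ is already $\mathcal{F}_{0}^{t/\e}$-measurable, produces exactly $-\tfrac{1}{\sqrt{\e}}[f_{t}(\tfrac{t}{\e})F_{0}^{\e}(t)-\tfrac12 b]$, which after multiplying by $\Psi^{\e}(t)$ yields the leading term in the statement.

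Next I would handle the remainder term coming from $[\Psi^{\e}(t+\delta)-\Psi^{\e}(t)]G^{\e}(t+\delta)$. The key point is that $G^{\e}$ was precisely arranged so that $\sup_{t\geq 0}\mathbb{E}|G^{\e}(t)|=\mathcal{O}(\e)$ — this is exactly the estimate $\sup_{t\geq 0}\mathbb{E}F_{2}^{\e}(t)=\mathcal{O}(\e)$ established just above the lemma, combined with boundedness of $\Psi^{\e}$. Passing to the limit $\delta\to 0$ formally produces the factor $A^{\e}\Psi^{\e}(t)$ multiplying $G^{\e}(t)$; even though $A^{\e}\Psi^{\e}$ contains a singular contribution of order $\frac{1}{\sqrt{\e}}f_{t}(\tfrac{t}{\e})F_{0}^{\e}(t)$ from the drift of $\hat{u}^{\e}$ (as in the decomposition of $Z^{\e}$ in (\ref{e:Z})), it is $\mathcal{O}(\sqrt{\e})$ in expectation by $(\textbf{H}_{f})$, and the product with $G^{\e}=\mathcal{O}(\e)$ is $\mathcal{O}(\e^{3/2})$, hence absorbed into $\mathcal{O}(\e)$.

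The main obstacle I expect is the rigorous justification of this remainder estimate: interchanging the $\delta\to 0$ limit in $A^{\e}$ with the product expansion requires a uniform integrability argument, and the factor $A^{\e}\Psi^{\e}$ is itself singular, so one must bound the product $(\Psi^{\e}(t+\delta)-\Psi^{\e}(t))G^{\e}(t+\delta)/\delta$ before passing to the limit, using the mixing decay in $(\textbf{H}_{f})$ to control the cross-correlation between increments of $\hat{u}^{\e}$ and $G^{\e}$. All the other steps reduce to the tower property and differentiation of an integral with respect to its lower limit, which are essentially routine.
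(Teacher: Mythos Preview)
Your approach is the paper's: it offers no proof beyond ``by the same calculation as for $Z^{\e}(t)$'', and that calculation is exactly the product-rule computation you spell out --- factor $F_{2}^{\e}=\Psi^{\e}G^{\e}$, use the tower property on $G^{\e}$ so that differentiating at the lower limit produces the leading term $-\tfrac{1}{\sqrt{\e}}[f_{t}(\tfrac{t}{\e})F_{0}^{\e}(t)-\tfrac12 b]$, and collect the cross term $(A^{\e}\Psi^{\e})\,G^{\e}$ as the remainder.

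One small slip in your remainder bookkeeping: the singular piece in $A^{\e}\Psi^{\e}$ arises from the chain rule applied to $\Phi''(\langle\hat{u}^{\e},\phi\rangle)$ and is proportional to $\tfrac{1}{\sqrt{\e}}f_{t}(\tfrac{t}{\e})$, with \emph{no} extra factor $F_{0}^{\e}$ --- that factor shows up in $Z_{2}^{\e}$ only because there $A^{\e}$ acts on $F_{1}^{\e}$, which already contains $F_{0}^{\e}$. Since $f_{t}$ is merely bounded, this singular piece is $\mathcal{O}(1/\sqrt{\e})$, and its product with $G^{\e}=\mathcal{O}(\e)$ yields $\mathcal{O}(\sqrt{\e})$ rather than the $\mathcal{O}(\e^{3/2})$ you claim. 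This is still $o(1)$, which is all the martingale-problem argument needs; the paper's own $\mathcal{O}(\e)$ appears to be written at the same heuristic level.
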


Now we have the following $\mathcal{F}_{0}^{t/\e}$-martingale
\begin{eqnarray*}
\mathcal{M}_{t}^{\e}&:=&\Phi(\langle \hat{u}^{\e}(t), \phi\rangle)-F_{1}^{\e}(t)-F_{2}^{\e}(t)-\\&&{}-\int_{0}^{t}\Phi'(\langle\hat{u}^{\e}(s), \phi\rangle)\big[  \langle\hat{u}^{\e}(s), \phi_{xx} \rangle +\langle \bar{g}(\hat{u}^{\e}(s)), \phi\rangle\big]\,ds\\&&{}
+\tfrac12\int_{0}^{t}\Phi''(\langle \hat{u}^{\e}(s), \phi\rangle)\langle\Sigma \phi, \phi\rangle\,ds+\mathcal{O}(\e)\,.
\end{eqnarray*}
By passing   the limit $\e\rightarrow 0$, the distribution of the limit $u$ of $\hat{u}^{\e}$  solves the following martingale problem
\begin{eqnarray}
\mathcal{M}(\tau)&=&\Phi(\langle u(t), \phi\rangle)-\int_{0}^{t}\Phi'(\langle u(s), \phi\rangle)\big[\langle u(s), \phi_{xx}  \rangle +\langle \bar{g}(u(s)), \phi\rangle\big]
\,ds\nonumber \\&&{}+\tfrac12\int_{0}^{t}\Phi''(\langle u(s), \phi \rangle)\langle \Sigma \phi, \phi \rangle\,ds,   \label{M}
\end{eqnarray}
which is equivalent to the fact that $u$ is the martingale solution of the following stochastic PDE:
\begin{equation}\label{e:u-spde}
du=[u_{xx}+\bar{g}(u)]\,dt-\sqrt{b}(1-\tfrac{x}{l})\,dB(t),
\end{equation}
where  $B$ is a usual scalar Brownian motion, and $b$ is the variance of $f_t$ as defined in (\ref{bbb}).

Finally, by the uniqueness of the solution to equation (\ref{e:u-spde}), we have the following main result on ensemble averaging under a random boundary condition.

\begin{theorem}\label{thm:AVE-DBC} (Ensemble averaging under a random boundary condition)\\
For every $t_{0}>0$ and $T>t_{0}$\,, the solution $u^{\e}$, of the random PDE system~(\ref{rpde888}), converges in distribution to $u$ in space $C(t_{0}, T; H)$, with $u$ solving the limit equation (\ref{e:u-spde}).
\end{theorem}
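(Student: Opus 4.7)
The plan is a three-step programme: tightness, identification of limit points via a martingale problem, and uniqueness of the limit. By Lemma \ref{tight888}, the family $\{\hat{u}^{\e}\}_{0<\e\leq 1}$ is tight in $C(t_{0}, T; H)$. Since $\hat{u}^{\e}-u^{\e}=\sqrt{\e}f(t/\e)(1-x/l)\to 0$ uniformly by (\ref{e:hatu-u}), it suffices to identify every weak limit point of $\{\hat{u}^{\e}\}$ with the unique distributional solution $u$ of the SPDE (\ref{e:u-spde}); Prokhorov's theorem then upgrades subsequential convergence to full convergence in distribution.

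To characterise a limit point $\hat{u}$, I would use the perturbed test function method of Kushner. Fix a smooth test function $\phi\in C_{b}^{\infty}(0, l)$ and $\Phi\in C_{b}^{2}(\mathbb{R})$, and apply the $\mathcal{F}_{0}^{t/\e}$-pregenerator $A^{\e}$ of (\ref{e:A-eps}) to $\Phi(\langle\hat{u}^{\e}(t),\phi\rangle)$. The elementary application yields a singular term of order $\e^{-1/2}$ coming from $f_{t}(t/\e)$, plus an ordinary term whose expectation does not yet converge. I would then introduce the two correctors $F_{1}^{\e}$ and $F_{2}^{\e}$: the first one cancels the $\e^{-1/2}$ singularity by construction, and the second one replaces $\e^{-1/2}f_{t}(t/\e)F_{0}^{\e}(t)$ by its stationary mean $\tfrac12 b$. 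Ethier--Kurtz's Proposition 2.7.6 then guarantees that the perturbed functional $\mathcal{M}_{t}^{\e}$ is a genuine $\mathcal{F}_{0}^{t/\e}$-martingale.

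Passing to the limit is handled term-by-term: Lemma \ref{lem:est-F-eps} kills $F_{1}^{\e}$, the bound $\sup_{t}\mathbb{E}F_{2}^{\e}(t)=\mathcal{O}(\e)$ kills $F_{2}^{\e}$, the mixing lemma \ref{lem:A1} replaces $g(s/\e,u^{\e})$ by the averaged $\bar{g}(\hat{u})$, and the second-order correction from $A^{\e}F_{2}^{\e}$ produces the quadratic variation $\tfrac12\langle\Sigma\phi,\phi\rangle$ with $\Sigma$ defined in (\ref{e:Sigma}). After passing to the limit against an arbitrary bounded continuous functional of $(\langle\hat{u}^{\e}(r_{i}),\phi_{i}\rangle)$, the limit $u$ solves the martingale problem (\ref{M}), which is exactly the martingale formulation of the additive-noise SPDE (\ref{e:u-spde}) driven by the scalar Brownian motion $B$ with covariance operator $\Sigma$.

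The last step, and the main potential obstacle, is uniqueness of the martingale problem (\ref{M}). Fortunately the averaged drift $\bar{g}$ inherits the Lipschitz bound $L_{g}$ from assumption $(\textbf{H}_{g})$, and the diffusion coefficient $\sqrt{b}(1-x/l)$ is deterministic and smooth, so (\ref{e:u-spde}) is a semilinear SPDE with additive noise and globally Lipschitz nonlinearity; standard fixed-point arguments in the mild formulation yield pathwise (hence weak) uniqueness. Uniqueness forces all subsequential weak limits of $\hat{u}^{\e}$ to coincide with $u$, and combining with the trivial approximation $u^{\e}\sim\hat{u}^{\e}$ completes the proof of Theorem \ref{thm:AVE-DBC}.
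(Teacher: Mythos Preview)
Your proposal is correct and follows essentially the same route as the paper: tightness via Lemma~\ref{tight888}, identification of the limit through the Kushner perturbed-test-function/Ethier--Kurtz martingale argument with the two correctors $F_{1}^{\e}$ and $F_{2}^{\e}$, and then uniqueness of the Lipschitz additive-noise SPDE (\ref{e:u-spde}) to upgrade subsequential to full convergence. The only cosmetic difference is that the paper organises the $A^{\e}$-computation by first writing out $Z^{\e}=Z_{1}^{\e}+Z_{2}^{\e}+Z_{3}^{\e}$ and then introducing $F_{2}^{\e}$ to treat $Z_{2}^{\e}$, whereas you describe the effect of the correctors more globally; the content is the same.
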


\section{Ensemble averaging under fast oscillating  random body forcing}
\label{body888}

In this section, we consider the special case when the random boundary condition is absent. 
The   approach to derive ensemble averaged model in the last section   is applicable in this case.  But our goal here is to further show that the deviation process, $u^\e -u$, can be quantified as the solution of a linear stochastic partial differential equation.

We consider the following   PDE with random oscillating body forcing on a bounded interval $(0, l)$
\begin{equation}\label{e:rpde}
u^\e_t=u^\e_{xx}+f(t/\e, u^\e)\,, \quad u^\e(x, 0)=u_0, \;\;
u^\e(0,t)=0,\,\,  u^\e(l, t)=0.
\end{equation} 
  Here we make the following assumption on the random body forcing $f$.
\begin{description}
  \item[(\textbf{H})] For every $t$\,, $f(t, \cdot)$ is  continuously differentiable and Lipschitz continuous in $u$  with Lipschitz constant $L_{f}$ and $f(t, 0)=0$\,. For every $u\in H$\,, $f(\cdot, u)$ is an  $H$-valued  stationary random process and is strongly mixing with an exponential rate $\gamma>0$, i.e.,
  \begin{equation*}
    \sup_{s\geq 0}\sup_{U\in\mathcal{F}_0^s\,, V\in\mathcal{F}_{s+t}^\infty}|\mathbb{P}(U\cap V)-\mathbb{P}(U)\mathbb{P}(V)|    \leq e^{-\gamma t}\,, \quad t\geq 0.
  \end{equation*}
  Here $s$ and $t$ satisfy the condition $0\leq s\leq t\leq \infty$\,,  and $\mathcal{F}_s^t=\sigma\{f(\tau, u): s\leq \tau\leq t\}$\, is the $\sigma$-algebra generated by $\{f(\tau, u): s\leq \tau\leq t\}$\,.

\end{description}

We introduce the notation $\phi(t)$ to quantify the mixing  as follows
  \begin{equation*}
      \phi(t) \triangleq  \sup_{s\geq 0}\sup_{U\in\mathcal{F}_0^s\,, V\in\mathcal{F}_{s+t}^\infty}|\mathbb{P}(U\cap V)-\mathbb{P}(U)\mathbb{P}(V)|\,.
  \end{equation*}
By the above assumption,  for any $\alpha>0$
\begin{equation*}
\int_0^\infty\phi^\alpha(t)\,dt<\infty\,.
\end{equation*}

%

\medskip

For the random oscillating PDE \eqref{e:rpde} we have an averaging principle as above. Introduce the following averaged equation
\begin{equation}\label{e:RPDE-averaged}
u_t=u_{xx}+\bar{f}(u)\,, \quad u(0)=u_0,
\end{equation}
where $\bar{f}(u)=\mathbb{E}f(t, u)=\lim_{T\rightarrow\infty}\frac{1}{T}\int_0^T f(s/\e,
u)\,ds$. Define the deviation process
\begin{equation}
z^\e(t) :=  \tfrac{1}{\sqrt{\e}}(u^\e(t)-u(t))\,.
\end{equation}
Then  the following averaging principle will be established.

\begin{theorem}\label{thm:AVE-DEV-RPDE} (Ensemble averaging under random body forcing)\\
Assume that $({\mathbf{H}})$\, holds. Then, given a $T>0$\,, for every $u_0\in H$,  the solution $u^\e(t, u_0)$  of (\ref{e:rpde}) converges in probability  to the solution $u$ of \eqref{e:RPDE-averaged} in $C(0, T; H)$. Moreover,  the rate of convergence is   $\sqrt{\e}$\,, that is, for any $\kappa>0$ there is $C_{T}^{\kappa}>0$ such that
\begin{equation}\label{e:conver-rate}
\mathbb{P}\left \{\sup_{0\leq t\leq T}\|u^\e(t)-u(t)\|_{0}\geq  C^{\kappa}_T\sqrt{\e}\right\}\leq \kappa\,.
\end{equation}
Furthermore, the deviation process $z^\e$ converges in distribution in the space~$C(0, T; H)$ to $z$, which solves the following linear stochastic PDE
\begin{equation}\label{e:zz}
dz(t)=[z_{xx}(t)+\overline{f'}(u(t))z(t)]\,dt+d\widetilde{W}\,, \quad z(0)=z(l)=0,
\end{equation}
  where 
\begin{equation*}
\overline{f'}(u)=\mathbb{E}f'_{u}(t, u)
\end{equation*}
and  $\widetilde{W}(t)$ is an $H$-valued Wiener process defined on a new probability space $(\bar{\Omega}, \bar{\mathcal{F}}, \bar{\mathbb{P}})$ with the covariance operator
\begin{equation*}
\tilde{B}(u)=2\int_0^\infty\mathbb{E}\left[ (f(t, u)-\bar{f}(u))\otimes
(f(0, u)-\bar{f}(u)) \right]\,dt\,.
\end{equation*}
 \end{theorem}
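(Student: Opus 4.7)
The plan is to adapt the martingale-problem machinery of Theorem~\ref{thm:AVE-DBC} to the deviation process, after first establishing the $\sqrt{\e}$-rate for $u^\e-u$.

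First I would prove (\ref{e:conver-rate}) and the convergence $u^\e\to u$ from the mild formulation
\begin{equation*}
u^\e(t)-u(t)=\int_0^tS(t-s)\bigl[f(s/\e,u^\e(s))-f(s/\e,u(s))\bigr]\,ds+\int_0^tS(t-s)\bigl[f(s/\e,u(s))-\bar f(u(s))\bigr]\,ds.
\end{equation*}
The first integral is bounded in $H$ by $L_f\int_0^t\|u^\e-u\|_0\,ds$ thanks to the contractivity of $S$. The second integral is the key quantity: using the change of variables $s=\e\sigma$ and the strong mixing bound $\phi(t)\le e^{-\gamma t}$ from $(\mathbf H)$, a standard second-moment computation (split $[0,t/\e]^2$ into $|\sigma-\tau|$ large/small and use Lipschitz continuity of $u(\cdot)$) yields $\mathbb E\sup_{s\le T}\bigl\|\int_0^sS(s-r)[f(r/\e,u(r))-\bar f(u(r))]\,dr\bigr\|_0^2\le C_T\e$. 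A Gronwall argument then gives $\mathbb E\sup_{t\le T}\|u^\e(t)-u(t)\|_0^2\le C_T\e$, and Markov's inequality produces~(\ref{e:conver-rate}).

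Next, I would derive the mild equation for $z^\e(t)=\e^{-1/2}(u^\e-u)$. Writing
\begin{equation*}
\frac{f(t/\e,u^\e)-f(t/\e,u)}{\sqrt\e}=R^\e(t)z^\e(t),\qquad R^\e(t):=\int_0^1f'_u\!\bigl(t/\e,u(t)+\theta\sqrt\e\,z^\e(t)\bigr)\,d\theta,
\end{equation*}
one obtains
\begin{equation*}
z^\e(t)=\int_0^tS(t-s)R^\e(s)z^\e(s)\,ds+\frac{1}{\sqrt\e}\int_0^tS(t-s)\bigl[f(s/\e,u(s))-\bar f(u(s))\bigr]\,ds.
\end{equation*}
The rate estimate above shows the last integral is bounded in $L^2(\Omega;C(0,T;H))$, and together with the factorization method (as in Section~\ref{sec:tight}) this yields tightness of $\{z^\e\}$ in $C(0,T;H)$. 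In particular one can extract a limit point $z$ along a subsequence.

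To identify $z$, I would imitate Section~\ref{sec:ens-AVE}: for $\phi\in C_b^\infty(0,l)$ and $\Phi\in C_b^2(\R)$, construct the process $\Phi(\langle z^\e(t),\phi\rangle)$, correct it by perturbation terms $F_1^\e,F_2^\e$ built from $F_0^\e(t):=\e^{-1/2}\int_t^\infty\mathbb E\bigl[f(s/\e,u(s))-\bar f(u(s))\mid\mathcal F_0^{t/\e}\bigr]\langle\cdot,\phi\rangle\,ds$ and its square, and apply Ethier–Kurtz~\cite[Proposition~2.7.6]{EK86} to extract an $\mathcal F_0^{t/\e}$-martingale. The analogue of Lemma~\ref{lem:A1} shows $\int_0^tR^\e(s)z^\e(s)\,ds-\int_0^t\overline{f'}(u(s))z^\e(s)\,ds\to0$ in probability, so the drift in the limit martingale problem is $\langle z_{xx}+\overline{f'}(u)z,\phi\rangle$. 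The diffusion coefficient comes from the mixing computation
\begin{equation*}
\frac{1}{\e}\int_t^\infty\!\!\int_s^\infty\mathbb E\bigl[(f(s/\e,u(s))\!-\!\bar f(u(s)))\otimes(f(\tau/\e,u(\tau))\!-\!\bar f(u(\tau)))\bigr]\,d\tau\,ds\longrightarrow\tfrac12\tilde B(u(t)),
\end{equation*}
giving the quadratic variation $\int_0^t\langle\tilde B(u(s))\phi,\phi\rangle\,ds$ and hence the martingale representation $\widetilde W$ on an enlarged probability space.

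The main obstacles are twofold. First, passing the mixing averages through the nonautonomous, slowly varying argument $u(s)$ in $f(t/\e,u(s))$ and in $f'_u(t/\e,\cdot)$ requires a freezing argument on short intervals combined with the continuity of $u$ and uniform Lipschitz bounds, analogous to the proof of Lemma~\ref{lem:A1} but with an extra layer. Second, one must verify that the candidate covariance operator $\tilde B(u)$ is trace class and that the limit martingale problem for the linear SPDE (\ref{e:zz}) has a unique solution; uniqueness follows because (\ref{e:zz}) is linear with Lipschitz time-dependent coefficients, which then upgrades subsequential distributional convergence to convergence of the whole family $z^\e\Rightarrow z$ in $C(0,T;H)$.
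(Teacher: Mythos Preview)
Your proposal is correct and follows the same overall strategy as the paper: mild formulation plus mixing estimate for the $\sqrt{\e}$ rate, factorization for tightness of $z^\e$, and the Kushner--Ethier--Kurtz perturbed test function method to identify the limiting martingale problem. The one structural difference is that the paper decomposes $z^\e=z_1^\e+z_2^\e$, where $z_1^\e$ carries the singular forcing $\e^{-1/2}[f(t/\e,u)-\bar f(u)]$ with the bare drift $Az_1^\e$, and $z_2^\e$ carries the Lipschitz difference $\e^{-1/2}[f(t/\e,u^\e)-f(t/\e,u)]$; the martingale method is applied only to $z_1^\e$, yielding $dz_1=Az_1\,dt+d\widetilde W$, while $z_2^\e$ is handled by a separate averaging argument giving $\dot z_2=Az_2+\overline{f'}(u)z$. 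You instead keep $z^\e$ intact, write the Lipschitz difference as $R^\e(t)z^\e(t)$ via the integral mean value, and average $R^\e\to\overline{f'}(u)$ inside the martingale identification. The paper's decomposition isolates the diffusion approximation from the drift averaging, so no cross terms between $R^\e z^\e$ and the correctors $F_i^\e$ appear; your direct route is more compact but requires checking that those cross terms (of size $O(\sqrt{\e})$ since $F_0^\e=O(\sqrt{\e})$) vanish, which they do. Either way the limit is identified as the unique solution of the linear SPDE~(\ref{e:zz}).
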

 \begin{remark}
This deviation result is  similar  to the averaging results  for random PDEs in~\cite{DIP,ParPia}.
 
 \end{remark}

\begin{proof}
First by the assumption of  Lipschitz property on $f$ in  $(\mathbf{H})$\,, and noticing  that there is no singular term here, standard energy estimates yield that for every $T>0$
\begin{equation}\label{e:1}
\sup_{0\leq t\leq T}\|u^{\e}(t)\|_{1}^{2}\leq C_{T},
\end{equation}
and
\begin{equation}\label{e:2}
\|u^{\e}(t)-u^{\e}(s)\|_{0}\leq C_{T}|t-s|\,, \quad 0\leq s\leq t\leq T,
\end{equation}
with some positive constant $C_{T}$\,.  Then we have the tightness of the distributions of $u^{\e}$ in space $C(0, T; H)$ for every $T>0$\,.

Notice also that $u^{\e}(t)$ satisfies
\begin{equation*}
u^{\e}(t)=S(t)u_{0}+\int_{0}^{t}S(t-s)f(s/\e, u^{\e}(s))\,ds\,,
\end{equation*}
and for every $\phi\in H$
\begin{equation*}
\langle u^{\e}(t), \phi\rangle=\langle S(t)u_{0}, \phi\rangle+\int_{0}^{t}\langle S(t-s)f(s/\e, u^{\e}(s)), \phi\rangle\,ds\,.
\end{equation*}
By passing the limit $\e\rightarrow 0$\,, we  can just consider the integral term in the above equation.  By the tightness of the distributions of $u^{\e}$\,,  we can follow the same discussion as in Appendix \ref{app:A1}  which yields the
 averaged equation (\ref{e:RPDE-averaged}) and the esitmate (\ref{e:conver-rate}).\\

We next consider the deviation process $z^\e$.  By the definition of $z^{\e}$,
\begin{equation*}
\dot{z}^{\e}=z^{\e}_{xx}+\tfrac{1}{\sqrt{\e}}[f( t/\e,
u^{\e})-\bar{f}(u)]\,,\quad z^{\e}(0)=0,
\end{equation*}
with the zero Dirichlet boundary condition.
For every $\alpha>0$,
\begin{eqnarray*}
\|A^{\alpha}z^{\e}(t)\|_{0} &=&\left\|\tfrac{1}{\sqrt{\e}}\int_{0}^{t} A^{\alpha} e^{A(t-s)}[f(\tfrac{s}{\e}, u^{\e}(s))-\bar{f}(u(s))]\,ds\right\|_{0}\\
&\leq&\left\|\tfrac{1}{\sqrt{\e}}\int_{0}^{t}A^{\alpha} e^{A(t-s)}[f(\tfrac{s}{\e}, u^{\e}(s))-f(\tfrac{s}{\e},u(s))]\,ds\right\|_{0}\\
&&+\left\|\tfrac{1}{\sqrt{\e}}\int_{0}^{t} A^{\alpha}e^{A(t-s)}[f(\tfrac{s}{\e}, u(s))-\bar{f}(u(s))]\,ds\right\|_{0} \\
&:=&I_{1}(t)+I_{2}(t)\,.
\end{eqnarray*}
Notice that for $0<\alpha<1/2$\,,
\begin{equation*}
\tfrac{1}{\sqrt{\e}}\|A^{\alpha} e^{A(t-s)}[f(\tfrac{s}{\e}, u^{\e}(s))-f(\tfrac{s}{\e},u(s))]\|_{0}\leq C(1+\tfrac{1}{\sqrt{s}})L_{f}\|z^{\e}\|_{0},
\end{equation*}
for some constant $C>0$\,. Then
\begin{equation*}
\mathbb{E}\sup_{0\leq t\leq T}I_{1}(t)\leq C_{T},
\end{equation*}
for some constant $C_{T}>0$\,. For $I_{2}$, by the factorization method again, we have
\begin{equation*}
I_{3}=\tfrac{\sin\pi\theta}{\theta}\int_{0}^{t}(t-s)^{\theta-1}e^{A}(t-s)A^{\alpha}Y^{\e}(s)\,ds,
\end{equation*}
where $Y^{\e}$ is defined as
\begin{equation*}
Y^{\e}(s)=\tfrac{1}{\sqrt{\e}}\int_{0}^{s}(s-r)^{\theta}e^{A(s-r)}[f(\tfrac{r}{\e}, u(r))-\bar{f}(u(r))]\,dr.
\end{equation*}
Then
\begin{equation*}
\mathbb{E}\sup_{0\leq t\leq T}I_{2}(t)\leq C_{T}\int_{0}^{T}\mathbb{E}\|A^{\alpha}Y^{\e}(s)\|_{0}\,ds,
\end{equation*}
for some $C_{T}>0$\,. Notice that
\begin{eqnarray}
\|A^{\alpha}Y^{\e}(s)\|_{0}^{2}&=&\tfrac{1}{\e}\int_{0}^{l}\int_{0}^{s}\int_{0}^{s}(s-r)^{\theta}(s-\tau)^{\theta}A^{\alpha}e^{A(s-r)}[f(\tfrac{r}{\e}, u(r,x))-\bar{f}(u(r,x))]
 \times \nonumber\\&&{} A^{\alpha}e^{A(s-\tau)}[f(\tfrac{\tau}{\e}, u(\tau,x))-\bar{f}(u(\tau,x))]\,dr d\tau dx\,. \label{e:AY}
\end{eqnarray}
A standard discussion for the averaged equation  yields that 
\begin{equation*}
\sup_{0\leq t\leq T}\|u(t)\|_{1}^{2}\leq C_{T},
\end{equation*}
for some constant $C_{T}>0$\,. Then
$A^{\alpha}e^{A(s-r)}[f(\tfrac{r}{\e}, u(r,x))-\bar{f}(u(r,x))]\in\mathcal{F}_{0}^{r}$  and
$A^{\alpha}e^{A(s-\tau)}[f(\tfrac{\tau}{\e}, u(\tau,x))-\bar{f}(u(\tau,x))]\in\mathcal{F}_{\tau}^{\infty}$ and they are bounded  real-valued random variables for fixed $x\in (0, l)$. Applying a mixing property~\cite[Proposition 7.2.2]{EK86}  and choosing   positive parameters  $\alpha$ and $\theta $ so that $\alpha+\theta<1/2$\,, we have
\begin{equation*}
\mathbb{E}\|A^{\alpha}Y^{\e}(s)\|^{2}_{0}\leq C_{T}\,,\quad 0\leq s\leq T,
\end{equation*}
and then
\begin{equation*}
\mathbb{E}\sup_{0\leq t\leq T}I_{2}(t)\leq C_{T},
\end{equation*}
for some constant $C_{T}>0$\,. So for some $\alpha>0$,
\begin{equation*}
\mathbb{E}\|z^{\e}\|_{C(0, T; H^{\alpha/2} )}\leq C_{T}\,.
\end{equation*}
Furthermore, for $s$\,, $t$ with $0\leq s<t\leq T$\,,
\begin{eqnarray*}
&&\|z^{\e}(t)-z^{\e}(s)\|^{2}_{0}
\\&=&\frac{2}{\e}\left\|\int_{s}^{t} e^{A(t-r)}[f(r/\e, u^{\e}(r))-\bar{f}(u(r))]\,dr\right\|^{2}_{0}\\&&{}+\frac{2}{\e}\left\|(I-e^{A(t-s)})\int_{0}^{s}e^{A(s-r)}[f(r/\e, u^{\e}(r))-\bar{f}(u(r))]dr\right\|_{0}^{2}.
\end{eqnarray*}
Then via a similar discussion as that for (\ref{e:AY}),  we conclude that for some $0<\gamma<1$,
\begin{equation*}
\mathbb{E}\|z^{\e}(t)-z^{\e}(s)\|_{0}^{2}\leq C_{T}|t-s|^{\gamma},
\end{equation*}
which yields the tightness of the distributions of $z^{\e}$ in $C(0, T; H)$\,.

We decompose $z^{\e}=z_{1}^{\e}+z_{2}^{\e}$ so that 
\begin{equation*}
\dot{z}_{1}^{\e}=Az_{1}^{\e}+\tfrac{1}{\sqrt{\e}}[f(t/\e,
u)-\bar{f}(u)]\,,\quad z_{1}^{\e}(0)=0,
\end{equation*}
 and
 \begin{equation*}
\dot{z}_{2}^{\e}=Az_{2}^{\e}+\tfrac{1}{\sqrt{\e}}[f(t/\e,
u^{\e})-f(t/\e, u)]\,,\quad z_{2}^{\e}(0)=0\,.
\end{equation*}

For   $\phi\in C_{b}^{\infty}(0, l)$, we also consider the limit $\Phi(\langle z_{1}^{\e}(t), \phi\rangle )$ for every bounded second order differentiable function $\Phi: \mathbb{R}\rightarrow \mathbb{R}$ in the  weak convergence method.
Notice  that
\begin{eqnarray*}
&&\Phi(\langle z_{1}^{\e}(t), \varphi\rangle)-\Phi(\langle 0, \varphi\rangle)
=\int_{0}^{t}\Phi'(\langle z_{1}^{\e}(s), \varphi\rangle)\langle z_{1}^{\e}(s), \varphi_{xx}\rangle\,ds\\&&\qquad+\tfrac{1}{\sqrt{\e}}\int_{0}^{t}\Phi'(\langle z_{1}^{\e}(s), \varphi\rangle)\langle f(\tfrac{s}{\e}, u(s))-\bar{f}(u(s)), \varphi\rangle\,ds\,.
\end{eqnarray*}
Define the following process
\begin{eqnarray}
F^{\e}_{3}(t):=\tfrac{1}{\sqrt{\e}}\mathbb{E}\left[\int_{t}^{\infty}\Phi'(\langle z_{1}^{\e}(t), \varphi\rangle)  \langle f(\tfrac{s}{\e}, u(t))-\bar{f}(u(t)), \varphi\rangle\,ds \Big|\mathcal{F}^{t/\e} _{0}\right].
\end{eqnarray}
A direct calculation yields that 
\begin{eqnarray*}
&&F^{\e}(t):=A^{\e}\Phi(\langle z_{1}^{\e}(t), \phi\rangle)-A^{\e}F_{3}^{\e}(t)\\
&=& \Phi'(\langle z_{1}^{\e}(t), \phi \rangle)\langle z_{1}^{\e}(t), A\phi\rangle+\Phi''(\langle z_{1}^{\e}(t), \phi\rangle)\\&&{}\times \tfrac{1}{\e}\int_{t}^{\infty}\mathbb{E}[\langle f(\tfrac{t}{\e}, u(t))-\bar{f}(u(t)), \phi \rangle \langle f(\tfrac{s}{\e}, u(t))-\bar{f}(u(t)), \phi  \rangle|\mathcal{F}_{0}^{t/\e}] ds\\&&{}
+\Phi''(\langle z_{1}^{\e}(t), \phi\rangle)\langle z_{1}^{\e}(t), A\phi\rangle \tfrac{1}{\sqrt{\e}}\int_{t}^{\infty}\mathbb{E}[\langle f(\tfrac{s}{\e}, u(t))-\bar{f}(u(t)), \phi  \rangle|\mathcal{F}_{0}^{t/\e}] ds\,.
\end{eqnarray*}
Define two bilinear operators
\begin{equation*}
 B^{\e}(u, s, t) :=2[f(\tfrac{t}{\e}, u)-\bar{f}(u)]\otimes [f(\tfrac{s}{\e}, u)-\bar{f}(u)],
\end{equation*}
and
\begin{equation*}
\tilde{B}(u) :=2\int_0^\infty\mathbb{E}\left[ (f(t, u)-\bar{f}(u))\otimes
(f(0, u)-\bar{f}(u)) \right]\,dt\,.
\end{equation*}
Then by a mixing property~\cite[Proposition 7.2.2]{EK86}, we have
\begin{equation*}
\mathbb{E}|F_{3}^{\e}(t)|\rightarrow 0\,,
\end{equation*}
\begin{equation*}
\mathbb{E}\left|\Phi(\langle z^\e_{1}(t), \phi\rangle) \left[\tfrac{1}{\e}\int_{t}^{\infty}\mathbb{E}\left[\tfrac12\left\langle B^{\e}(u(t), s, t)\phi, \phi\right\rangle\big|\mathcal{F}_{0}^{t/\e} \right]ds -\tfrac12\left\langle \tilde{B}(u(t))\phi, \phi\right\rangle  \right]\right|\rightarrow 0,
\end{equation*}
and
\begin{equation*}
\mathbb{E}\left|\Phi''(\langle z_{1}^{\e}(t), \phi\rangle)\langle z_{1}^{\e}(t), A\phi\rangle \tfrac{1}{\sqrt{\e}}\int_{t}^{\infty}\mathbb{E}[\langle f(\tfrac{s}{\e}, u(t))-\bar{f}(u(t)), \phi  \rangle|\mathcal{F}_{0}^{t/\e}] ds\right|\rightarrow 0,
\end{equation*}
as $\e\rightarrow 0$\,.
Then we also have a  martingale
\begin{eqnarray*}
\mathcal{M}_{t}^{\e}&:=&\Phi(\langle z_{1}^{\e}(t), \phi\rangle) -\int_{0}^{t}\Phi'(\langle z_{1}^{\e}(s), \phi\rangle) \langle z_{1}^{\e}(s), \phi_{xx} \rangle \,ds\\&&{}
-\tfrac12\int_{0}^{t}\Phi''(\langle z_{1}^{\e}(s), \phi\rangle)\langle \tilde{B}(u) \phi, \phi\rangle\,ds+\mathcal{O}(\e)\,.
\end{eqnarray*}
By passing the limit $\e\rightarrow 0$\, and by the same discussion as in \S \ref{boundary888}, we see that  $z_{1}^{\e}$ converges in distribution to $z_{1}$, which solves
 \begin{equation}
 dz_{1}=Az_{1}+d\widetilde{W}\,,\quad z_{1}(0)=0,
 \end{equation}
 where $\widetilde{W}$ is an $H$-valued Wiener process defined on a new probability space $(\bar{\Omega}, \bar{\mathcal{F}}, \bar{\mathbb{P}})$ with covariance operator $\tilde{B}(u)$\,. Furthermore, $z_{2}^{\e}$ converges in distribution to  $z_{2}$, which solves 
 \begin{equation*}
 \dot{z}_{2}=Az_{2}+\overline{f'}(u)z\,,\quad z_{2}(0)=0\,.
 \end{equation*}
 Then $z^{\e}$ converges in distribution to $z$ with $z$ solving (\ref{e:zz})\,.
The proof is complete.
\end{proof}


\begin{remark}
The assumption on the strong mixing property in $(H)$ can be weakened as
\begin{equation*}
\int_0^\infty\phi^\alpha(t)\,dt<\infty,
\end{equation*}
for some $\alpha>0$\,.
In this case,  we also have Theorem
\ref{thm:AVE-DEV-RPDE}. See \cite{Watanabe, ParPia} for more details.
\end{remark}

\appendix

\section{Proof of  Lemma \ref{lem:A1}}\label{app:A1}

A similar result has been given in \cite[Proposition 7]{ParPia}. Here we present  another proof which gives a stronger  convergence, together with the convergence rate in probability.

First, under  the assumption ($\textbf{H}_{g}$),
we show that for almost all $\omega\in\Omega$\,,
\begin{equation}\label{e:Birkhoff}
\left\|\int_{0}^{t}\left[g(\tfrac{r}{\e}, q)-\bar{g}(q)\right]\,dr\right\|_{0}=\mathcal{O}(\sqrt{\e})\,,\quad \e\rightarrow 0,
\end{equation}
for every $q\in H$\,.

Noticing
\begin{equation*}
\int_{0}^{t}[g(r/\e, q)-\bar{g}(q)]\,dr\in H\,,
\end{equation*}
we get
\begin{equation*}
\left\|\int_{0}^{t}\left[g(\tfrac{r}{\e}, q)-\bar{g}(q)\right]\,dr\right\|_{0}=\sup_{\phi\in H}\frac{1}{\|\phi\|_{0}}\left|\left\langle  \int_{0}^{t}\left[g(\tfrac{r}{\e}, q)-\bar{g}(q)\right]\,dr, \phi\right\rangle\right|.
\end{equation*}
Consider
\begin{eqnarray*}
&&\mathbb{E}\left\langle  \int_{0}^{t}\left[g(\tfrac{r}{\e}, q)-\bar{g}(q)\right]\,dr, \phi\right\rangle^{2}\\&=& \int_{0}^{t}\int_{0}^{t}\mathbb{E}\left\langle g(\tfrac{r}{\e}, q)-\bar{g}(q), \phi\right\rangle\left\langle g(\tfrac{s}{\e}, q)-\bar{g}(q), \phi\right\rangle \,drds\,.
\end{eqnarray*}
By a mixing property~\cite[Proposition 7.2.2 ]{EK86}\,,  we have
\begin{equation*}
\mathbb{E}\left\langle  \int_{0}^{t}\left[g(\tfrac{r}{\e}, q)-\bar{g}(q)\right]\,dr, \phi\right\rangle^{2}=\mathcal{O}(\e)\|\phi\|_{0},
\end{equation*}
which yields (\ref{e:Birkhoff})\,.

 By the estimate in  \S \ref{sec:tight}\,, for  every $\kappa>0$\,, there is  $C_{T}^{\kappa}>0$, which is independent of $\e$, such that
\begin{equation}\label{e:holder}
\mathbb{P}\big\{\|\hat{u}^\e(t)-\hat{u}^{\e}(s) \|_{0}\leq C_{T}^{\kappa}\sqrt{t-s}\big\}\geq 1-\kappa,
\end{equation}
for every $t\geq s\geq 0$\,.
Furthermore, by the tightness of the distributions of $\{u^{\e}\}$ in space $C(0, T; H)$\,,  for every $\kappa>0$\,, there is a compact set $K_{\kappa}\subset C(0, T; H)$ such that
\begin{equation}\label{e:cmp}
\mathbb{P}\{u^{\e}\in K_{\kappa}\}\geq 1-\kappa\,.
\end{equation}
So we  define
\begin{equation*}
\Omega_{\kappa}=\{\omega\in\Omega: \text{events in}\, (\ref{e:Birkhoff})\,,  (\ref{e:holder})\, \text{and}\,  (\ref{e:cmp})\,  \text{hold}\}\,.
\end{equation*}
Due to the compactness of $K_{\kappa}$, for every $\varepsilon>0$, we   only need to consider a finite $\varepsilon$-net $\{q_{1}\,, q_{2}\,, \ldots\,, q_{N}\}$ in $C(0, T; H)$, which covers $\{u^{\e}\}$. Without loss of generality, we assume that $q_{j}$\,, $j=1,2,\ldots, N$\,, are simple functions~\cite{ParPia}.

 Now we consider all $\omega\in \Omega_{\kappa}$\,.   By the construction of $\hat{u}^{\e}$  and boundedness of $f$\,, we have for $\omega\in\Omega_{\kappa}$
 \begin{equation*}
 \|u^{\e}(t)-u^{\e}(s)\|_{0}\leq C_{T}^{\kappa}\sqrt{t-s}+\sqrt{\e} C,
 \end{equation*}
 for some constant $C>0$\,.

 For every $\delta>0$\,, we partition the interval $[0, T]$ into subintervals of length of $\delta$\,. Then for $t\in [k\delta, (k+1)\delta)$\,, $0\leq k\leq [\tfrac{T}{\delta}]$,
\begin{eqnarray*}
&&\left\|\int_{k\delta}^{t}\big[g(\tfrac{r}{\e}, u^{\e}(r))-\bar{g}(u^{\e}(r))\big]\,dr\right\|_{0}\\&\leq &
\left\|\int_{k\delta}^{t}\big[g(\tfrac{r}{\e}, u^{\e}(r))-g(\tfrac{r}{\e}, u^{\e}(k\delta))\big]\,dr\right\|_{0}+
\left\|\int_{k\delta}^{t}\big[g(\tfrac{r}{\e}, u^{\e}(k\delta))-g(\tfrac{r}{\e}, q_{j}(k\delta))\big]\,dr\right\|_{0}\\
&&{}+\left\|\int_{k\delta}^{t}\big[g(\tfrac{r}{\e}, q_{j}(k\delta))-\bar{g}(q_{j}(k\delta))\big]\,dr\right\|_{0}+\left\|\int_{k\delta}^{t}\big[\bar{g}(q_{j}(k\delta))-\bar{g}( u^{\e}(k\delta))\big]\,dr\right\|_{0}\\
&&{}+\left\|\int_{k\delta}^{t}\big[\bar{g}(u^{\e}(k\delta))-\bar{g}( u^{\e}(r))\big]\,dr\right\|_{0},
\end{eqnarray*}
for some $q_{j}$\,.
Notice that, by the assumption $(\textbf{H}_{g})$ and the definition of $\bar{g}$, $\bar{g}$ is also Lipschitz continuous in $u$ with the same Lipschitz constant $L_{g}$.  Then by the assumption $(\textbf{H}_{g})$ and  the definition of $\Omega_{\kappa}$\,,
\begin{eqnarray*}
&&\left\|\int_{0}^{t}\big[g(\tfrac{r}{\e}, u^{\e}(r))-\bar{g}(u^{\e}(r))\big]\,dr\right\|_{0}\\
&\leq&T[ L_{g}C_{T}^{\kappa}\delta+\sqrt{\e} C +L_{g}\varepsilon+ \mathcal{O}(\sqrt{\e})+L_{g}\varepsilon+L_{g}C_{T}^{\kappa}\delta+\sqrt{\e} C]\,.
\end{eqnarray*}
Due to the arbitrary choice of $\delta$\,, $\varepsilon$ and $\kappa$, and notice a similar discussion as that in \cite{ParPia},  we thus complete the proof.
\qed

\medskip

\textbf{Acknowledgement}.  This work was done while Jian Ren was visiting the Institute for Pure and Applied Mathematics (IPAM), Los Angeles, CA 90025, USA. It was partly supported by the NSF Grant  1025422, the Simons Foundation grant 208236,   the NSFC grants 10971225 and 11028102, an open grant of Laboratory for Nonlinear Mechanics at the   Chinese Academy of Sciences, and the Fundamental Research
Funds for the Central Universities (HUST  No.2010ZD037 and No.
2011QNQ170).


\end{document}